\numberwithin{equation}{section}
\renewcommand\d{\partial}
\newcommand\dD{\textrm{d}}
\newcommand\eD{\textrm{e}}
\def\eps{\varepsilon }
\newcommand{\I}{{\rm I}}
\newcommand{\funcun}{{\rm \bf 1}}
\newcommand\br{\begin{remark}}
\newcommand\er{\end{remark}}
\newcommand\bp{\begin{pmatrix}}
\newcommand\ep{\end{pmatrix}}
\newcommand{\be}{\begin{equation}}
\newcommand{\ee}{\end{equation}}
\newcommand\ba{\begin{equation}\begin{aligned}}
\newcommand\ea{\end{aligned}\end{equation}}
\newcommand\ds{\displaystyle}
\newcommand{\beg}{\begin{example}}
\newcommand{\eeg}{\end{exaplem}}
\newcommand{\bpr}{\begin{proposition}}
\newcommand{\epr}{\end{proposition}}
\newcommand{\bt}{\begin{theorem}}
\newcommand{\et}{\end{theorem}}
\newcommand{\bc}{\begin{corollary}}
\newcommand{\ec}{\end{corollary}}
\newcommand{\bl}{\begin{lemma}}
\newcommand{\el}{\end{lemma}}
\newcommand{\bd}{\begin{definition}}
\newcommand{\ed}{\end{definition}}
\newcommand{\brs}{\begin{remarks}}
\newcommand{\ers}{\end{remarks}}
\newtheorem{theorem}{Theorem}[section]
\newtheorem{proposition}[theorem]{Proposition}
\newtheorem{corollary}[theorem]{Corollary}
\newtheorem{lemma}[theorem]{Lemma}
\theoremstyle{remark}
\newtheorem{remark}[theorem]{Remark}
\theoremstyle{definition}
\newtheorem{definition}[theorem]{Definition}
\newtheorem{example}[theorem]{Example}
\newcommand\R{\mathbf R}
\newcommand\T{\mathbf T}
\newcommand{\N}{\mathbf N}
\newcommand{\Z}{\mathbf Z}
\newcommand{\lla}{\left\langle}
\newcommand{\rra}{\right\rangle}
\newcommand{\nrm}{\vvvert}
\newcommand{\RR}{{\mathbf R}}
\newcommand{\TT}{{\mathbf T}}
\newcommand{\RM}{{\mathbf R}}
\newcommand{\TM}{{\mathbf T}}
\newcommand\cE{{\mathcal E}}
\newcommand\cF{{\mathcal F}}
\newcommand\cH{{\mathcal H}}
\newcommand\cR{{\mathcal R}}
\newcommand\tf{{\widetilde f}}
\newcommand\tg{{\widetilde g}}
\newcommand\tn{{\widetilde n}}
\newcommand\teta{{\widetilde \eta}}
\newcommand\ttheta{{\widetilde \theta}}
\newcommand\tE{\widetilde{E}}
\newcommand\tQ{\widetilde{Q}}
\newcommand\tT{\widetilde{T}}
\newcommand\phiinf{\phi_\infty^\delta}
\newcommand\Einf{E_\infty^\delta}
\newcommand\rhoinf{\rho_\infty}
\newcommand\finf{f_\infty}
\newcommand{\tref}{t_{\textrm{ref}}}
\newcommand{\fref}{f_{\textrm{ref}}}
\newcommand{\fFP}{f_{\textrm{FP}}}
\newcommand{\Eref}{E_{\textrm{ref}}}
\newcommand{\tEref}{\tE_{\textrm{ref}}}
\newcommand{\rhoref}{\rho_{\textrm{ref}}}
\newcommand{\nref}{n_{\textrm{ref}}}
\newcommand{\tnref}{\tn_{\textrm{ref}}}
\newcommand{\jref}{j_{\textrm{ref}}}
\newcommand{\Sref}{\mathbb{S}_{\textrm{ref}}}
\newcommand{\href}{h_{\textrm{ref}}}
\newcommand{\hlin}{h_{\textrm{lin}}}
\newcommand{\has}{h_{\textrm{as}}}
\newcommand{\nas}{n_{\textrm{as}}}
\newcommand{\rhoas}{\rho_{\textrm{as}}}
\newcommand{\Eas}{E_{\textrm{as}}}
\title{
Large-time behavior of solutions to Vlasov-Poisson-Fokker-Planck equations: from 
evanescent collisions to diffusive limit
}
\author{Maxime Herda}
\address{Maxime Herda,
Universit\'e de Lyon,
CNRS UMR 5208,
Universit\'e Lyon 1,
Institut Camille Jordan,
43 bd 11 novembre 1918;
F-69622 Villeurbanne Cedex, FRANCE}
\email{{\tt Herda@math.univ-lyon1.fr}}
\author{L.Miguel Rodrigues}
\address{Luis Miguel Rodrigues,
Universit\'e de Rennes 1,
IRMAR, UMR CNRS 6625,
263 avenue du General Leclerc;
F-35042 Rennes Cedex, FRANCE}
\email{{\tt luis-miguel.rodrigues@univ-rennes1.fr}}
\thanks{Research of L.Miguel Rodrigues was partially supported by the ANR project
BoND ANR-13-BS01-0009-01.\\
}
\begin{document}

\begin{abstract}
The present contribution investigates the dynamics generated by the two-dimensional Vlasov-Poisson-Fokker-Planck equation for charged particles in a steady inhomogeneous background of opposite charges. We provide global in time estimates that are uniform with respect to initial data taken in a bounded set of a weighted $L^2$ space, and where dependencies on the mean-free path $\tau$ and the Debye length $\delta$ are made explicit. In our analysis the mean free path covers the full range of possible values: from the regime of evanescent collisions $\tau\to\infty$ to the strongly collisional regime $\tau\to0$. 
As a counterpart, the largeness of the Debye length, that enforces a weakly nonlinear regime, is used to close our nonlinear estimates. Accordingly we 
pay a special attention to relax as much as possible the $\tau$-dependent constraint on $\delta$ ensuring exponential decay with explicit $\tau$-dependent rates towards the stationary solution. 
In the strongly collisional limit $\tau\to0$, we also examine all possible asymptotic regimes selected by a choice of observation time scale. Here also, our emphasis is on strong convergence, uniformity with respect to time and to initial data in bounded sets of a $L^2$ space. 
Our proofs rely on a 
detailed 
study of the nonlinear elliptic equation defining stationary solutions and a careful tracking and optimization of parameter dependencies of hypocoercive/hypoelliptic estimates.
\end{abstract}

\date{\today}
\maketitle

{\it Keywords}: Vlasov-Poisson ; Fokker-Planck ; large-time behavior ; hypocoercivity ; hypoellipticity ; diffusion limit.

{\it 2010 MSC}: 35Q83 ; 35Q84 ; 35B35 ; 35B40 ; 35B30.

\tableofcontents


\section{Introduction}\label{s:introduction}

In a periodic box $\TM^2\,=\,\R^2/\Z^2$, we consider a large number of charged particles subject to self-consistent electrostatic forces and interacting with a fixed background of steady heavy particles with opposite charge as well as a thermal bath. The system can be described at a kinetic level by a distribution function $f:\RM_+\times\TM^2\times\RM^2 \rightarrow \RM_+$, $(t,x,v)\mapsto f(t,x,v)$ obeying the Vlasov-Poisson-Fokker-Planck (VPFP) system which reads in dimensionless form
 \be
  \left\{
 \begin{array}{l}
\ds\partial_{t}f + v\cdot\nabla_{x}f  - \nabla_{x}\phi \cdot \nabla_{v}f\ =\ \frac{1}{\tau}\,\textrm{div}_{v} \left( vf + \nabla_{v}f\right),\\
\,\\
\ds-\delta^2\Delta_{x}\phi\ =\ \rho - \rho_*,\qquad \rho\ =\ \int_{\R^2}\,f\,\dD{v}
\end{array}
\right.
\label{VPFPf}
 \ee
and is completed with the prescription of an initial data, $f(0,\cdot,\cdot)=f_0$.
 
The number density of background particles is given as $\rho_*:\TM^2\to\R_+$ whereas $\phi:\R_+\times\T^2\to\R$ is the electrostatic potential satisfying a Poisson equation.
Interactions with the thermal bath are modeled by the Fokker-Planck term at the right-hand side of  the first equation. The characteristic temperature of the bath is scaled to $1$. The respective total charges have also been scaled to $1$,
$$
\int_{\T^2\times\R^2}f_0\,=\,1\,,\qquad\qquad \int_{\T^2}\rho_*\,=\,1\,,
$$
the constraint being preserved by the time evolution. 

The parameter $\tau$ denotes the scaled mean free path 
between two "collisions" with the thermal bath. The scaled Debye length $\delta$ measures the radius of electrostatic influence of an isolated particle. The asymptotics $\delta\rightarrow 0$ is called the quasineutral limit and the terminology quasineutral parameter is also used for $\delta$. 
The quasineutral regime is very relevant for plasma dynamics. However, for collisionless equations and initial data with general shape profiles, instabilities are known to arise in the quasineutral limit \cite{han_2015_stability}. Since we consider general shapes and all Knudsen numbers uniformly in time, we need to remain far from this complex regime. In particular we allow ourselves to use the largeness of $\delta$ to close nonlinear estimates. While, as 
we discuss further later on, our strategy may potentially be extended 
to any dimension, 
we anticipate that 
the precise outcome would be significantly modified by another dimensional choice therefore we also choose to restrict ourselves to the two dimensional setting. 

The VPFP system, as expounded here or with some variants, including the consideration of  gravitational forces instead of electrostatic forces, has a long history. A derivation of the model and references to even earlier derivations may already be found in a seminal piece of work by Chandrasekhar \cite{chandrasekhar_1943_stochastic}. Concerning the Cauchy problem in two dimensions, first global well-posedness results were obtained by Neunzert, Pulvirenti and Triolo \cite{neunzert_1984_on} on $\R^2$ for bounded compactly-supported initial data, by the method of stochastic characteristics; then by Degond \cite{degond_1986_global} for a frictionless version of the system, on $\R^2$, for $W^{1,1}$ data with finite moments in velocity of more than second order, by relying mostly on suitable maximum principles. Since then those results have been extended and improved in various ways \cite{victory_1990_classical,carrillo_1995_initial,Ono-VPFP}.

Our goal is to provide a description of the dynamics of solutions to Equation~\eqref{VPFPf} on every possible time scale, from initial data to exponential convergence towards a stationary state, and in any regime of the collisional parameter $\tau$. We also aim at providing strong convergence results uniform with respect to initial data taken from bounded sets. Our normalization already contains $\|f_0\|_{L^1(\T^2\times\R^2)}=1$. However to benefit from a Hilbert structure we shall use a weighted $L^2$ space embedded in $L^1$ instead of $L^1$ itself. Namely we introduce the norm $\|\cdot\|_{L^2(M^{-1})}$ defined by
\[
 \|u\|_{L^2(M^{-1})}^2\ =\ \iint_{\TT^2\times\RR^2}\tfrac{|u(x,v)|^2}{M(v)}\,\dD x\,\dD v
\]
where $M$ is the local Maxwellian
\[
M(v) = \frac{1}{(2\pi)}\,e^{-\tfrac12|v|^2}\,.
\]
The main advantage in this particular weight choice stems from the fact that the Fokker-Planck operator is symmetric on $L^2(M^{-1})$. Then we choose $f_0\geq0$ such that 
$$
\|f_0\|_{L^2(M^{-1})}\leq R_0
\,,
$$
where $R_0\geq1$ is fixed but arbitrary and obtain bounds depending on $R_0$ but not on $f_0$ itself.

As a preliminary observation, note that stationary states $(\finf,\phiinf)$ are characterized by
$$
\left\{
\begin{array}{l}
\ds
v\cdot\nabla_{x}\finf  - \nabla_{x}\phiinf \cdot \nabla_{v}\finf\ =\ \frac{1}{\tau}\,\textrm{div}_{v} \left( v\finf + \nabla_{v}\finf\right)\,,\\
\ds
-\delta^2\Delta_{x}\phiinf\ =\ \rhoinf - \rho_*\,,
\qquad \rhoinf\ =\ \int_{\R^2}\,\finf\,\dD{v}.
\end{array}
\right.
$$
Since the Fokker-Planck operator is symmetric on $L^2(\eD^{-\phiinf}M^{-1})$ and the transport part is skew-symmetric on this space, if for instance $\phiinf$ is bounded and $\finf\in L^2(M^{-1})$ then the two parts must vanish separately when applied to $\finf$. Therefore $\finf$ is a global Maxwell-Boltzmann distribution
\be
 \finf(x,v) = M(v)e^{-\phiinf(x)},
\label{globmax}
\ee
and the stationary equations reduce to
\be
  -\delta^2\Delta_{x}\phiinf = e^{-\phiinf} - {\rho_*}\,.
  \label{PoissonEmdenIntro}
\ee
Concerning the latter, we observe that

\bl
For any $\rho_*\in H^{-1}(\T^2)$ such that $\int_{\T^2}\rho_*=1$, for any $\delta>0$, Equation~\eqref{PoissonEmdenIntro} possesses a unique weak solution $\phiinf\in H^1(\T^2)$ and moreover $\int_{\TT^2}e^{-\phiinf}=1$.
\el

Our analysis involves a more detailed study of Equation~\eqref{PoissonEmdenIntro}. Though we have not found in the literature directly applicable results, the first steps of our analysis of equilibrium states is however by now essentially standard \cite{Gogny-Lions_equilibre-plasmas,Dressler_steady-VFP,Dolbeault_stationary-states,glassey_1996_steady,dolbeault_1999_free,CDMS,Duan-Yang-Zhu_stationary_VPB,Duan-Yang_stability_VPB}. It is noteworthy that the existence of nontrivial stationary states holds for most generalizations of \eqref{VPFPf}. To prevent it, one essentially needs to look either at frictionless versions, where the Fokker-Planck operator is replaced with a Laplacian in velocity, or at equations on $\R^2$ with no background density $\rho_*\equiv0$ and no confining potential \cite{glassey_1996_steady,dolbeault_1999_free}. Correspondingly, in the former case, in various perturbative settings, self-similar decay to zero at algebraic rates has been proven \cite{Carrillo-Soler-Vazquez_asymptotic_self-similar_3D-VPFP,Carpio_long-time-VPFP,Ono-Strauss_regular-solutions_VPFP,Kagei-VPFP-invariant-manifolds}.

In \cite{bouchut_1995_on,dolbeault_1999_free}, Bouchut and Dolbeault prove that in $\R^d$, $d\geq3$ in presence of a confining potential, when $\rho_*\equiv0$, solutions $f$ starting from initial data with finite mass, finite energy, finite entropy and such that $\nabla_x\phi\in L_{loc}^\infty((0,\infty);L^\infty(\R^d))$, converge in $L^1(\R^{2d})$ to a global Maxwell-Boltzmann distribution as time goes to infinity. Their proof is likely to extend to the case under consideration and our focus is not in reproving such behavior but in providing uniform bounds in the same space for the solution and the initial data, and including exponential decay to equilibrium.

Obviously this encompasses asymptotic stability of equilibrium states. For the equation under consideration, Hwang and Jang \cite{hwang_2013_vlasov} have already proved some asymptotic stability results for the case where $\rho_*\equiv1$ so that $\phiinf\equiv0$, in a topology requiring $(f_0-\finf)/\sqrt{\finf}$ to be small in $H^3(\T^2\times\R^2)$ (instead of $L^2$). Note that condition $\phiinf\equiv0$ brings a lot of exceptional cancellations in the analysis and a considerable part of asymptotic stability results for similar equations have been so far focused essentially on this case. Our result contains asymptotic stability in $L^2$ with explicit dependence on parameters of exponential convergence rates, and arbitrary $\rho_*$, provided $\delta$ is sufficiently large. Though we have not investigated this track, we anticipate that our strategy of proof would also yield exponential asymptotic stability in $L^2$ for any fixed $\delta$ provided $\rho_*-1$ is sufficiently small, in the spirit of \cite{Duan-Yang_stability_VPB}. During the completion of the present contribution, also appeared by Bedrossian \cite{Bedrossian_VPFP} an extension of \cite{hwang_2013_vlasov}, still with $\rho_*\equiv1$ and high-regularity weighted $L^2$ spaces (but with algebraic weights instead of Gaussian ones), to a version of the system where collisions are more nonlinear and model self-collisions instead of collisions with a thermal bath. More importantly to us, holding $\delta$ fixed, Bedrossian provides a careful study of dependencies on the parameter $\tau$ in the limit $\tau\to\infty$, benefiting both from Landau damping for the limiting Vlasov-Poisson system \cite{MouhotVillani-damping,BedrossianMasmoudiMouhot-damping} and from mixing-enhanced dissipation. This extends to the full nonlinear regime the former linearized analysis by Tristani \cite{Tristani-damping-weakly-collisional} and is similar in spirit to \cite{BedrossianMasmoudiVicol-enhanced-dissipation-inviscid-damping,BedrossianGermainMasmoudi-3D-CouetteI,BedrossianGermainMasmoudi-3D-CouetteII,BedrossianVicolWang-2DCouette,BedrossianGermainMasmoudi-3DCouette} that build on inviscid damping for shear flows of the Euler system \cite{BedrossianMasmoudi-inviscid-damping-note,BedrossianMasmoudi-inviscid-damping}.

It is important to note however that our nonlinear parameter shall not be the distance of the initial data to some equilibrium, that is essentially arbitrary here, but the inverse of the quasineutral parameter. In particular even when $\rho_*\equiv1$ our set of initial data contains data that fail to satisfy Penrose stability criterion, that is known to play a crucial role for the Vlasov-Poisson system both in the large-time limit \cite{penrose_1960_electrostatic,guo_1995_nonlinear,MouhotVillani-damping,BedrossianMasmoudiMouhot-damping} and in the quasineutral limit \cite{han_2015_quasineutral}. Though the currently available results \cite{degond_1986_global} concerning the approximation of solutions to \eqref{VPFPf} by those of the Vlasov-Poisson system are not precise enough to justify relevant heuristic arguments, this strongly hints at the fact that one cannot benefit from mixing properties at least initially and that one needs to take $\delta\to\infty$ when $\tau\to\infty$.

Though they do not state it in this precise way, the recent analysis by H\'erau and Thomann \cite{herau_2016_global} proves precisely that for any fixed $(\tau,R_0)$ there indeed exists a $\delta_0(\tau,R_0)$ such that when $\delta>\delta_0(\tau,R_0)$ one may obtain global bounds for the solution, for the version of the system set on $\R^2$, with a confining potential but with $\rho_*\equiv0$. Our main goal here is to provide explicit upper bounds for those $\delta_0(\tau,R_0)$. The analysis of H\'erau and Thomann scales badly with respect to $\tau$, due to the anisotropic nature of hypocoercivity/hypoellipticity. Indeed, their analysis uses directly decay estimates from \cite{Herau_FP-confining} (see also  \cite{Herau-Nier,Helffer-Nier_book}) for the linearization about a stationary solution. Yet using those would prevent us from keeping track of the anisotropic nature of dissipation that helps in improving estimates of $\delta_0(\tau,R_0)$. 
Moreover, as we discuss more precisely below --- see comments around Propositions~\ref{p:remcolllin} and~\ref{p:remcoll} ---, in the limit $\tau\to0$, our optimization of involved hypocoercive Lyapunov functionals differs from what would follow from an optimal treatment of the linearized problem !
 
\subsection*{Main results}

Let us state our main results concerning the latter, that we split in cases including respectively either the strongly collisional regime or the the regime of evanescent collisions.

Our first result concerns the diffusive regime, namely when $\tau$ is small.
\bt[Diffusive regime; $\tau\lesssim 1$]~\\
For any $\tau_0>0$, any $R_0>1$ and any $\rho_*\in W^{1,p}(\T^2)$, with $p>2$, such that $\int_{\T^2}\rho_*=1$, there exist positive constants 
$\theta_0$ 
  and $K$ 
  such that for any $\tau\leq\tau_0$, any $\delta \geq K\,(1+R_0^{1/2})$, 
\begin{itemize}
\item for any $f_0$ such that 
$$
\|f_0\|_{L^2(M^{-1})}\leq R_0
\qquad\textrm{and}\qquad 
\int_{\T^2\times\R^2}f_0=1\,,
$$
then Equation~\eqref{VPFPf} possesses a (unique strong) solution $f$ starting from $f_0$, and it satisfies for any $t\geq0$
\[
 \left\|f(t,\cdot,\cdot) - \finf\right\|_{L^2(M^{-1})}\ \leq\ K\,\|f_0-\finf\|_{L^2(M^{-1})}\, e^{-\theta_0\,\tau\, t}
\]
where $\finf$ solves \eqref{globmax}-\eqref{PoissonEmdenIntro};
\item for any $f_0$, $g_0$ such that 
$$
\|f_0\|_{L^2(M^{-1})}\leq R_0\,,\quad
\|g_0\|_{L^2(M^{-1})}\leq R_0
\qquad\textrm{and}\qquad 
\int_{\T^2\times\R^2}f_0=\int_{\T^2\times\R^2}g_0=1\,,
$$
then corresponding solutions $f$ and $g$ satisfy for any $t\geq0$
\[
 \left\|f(t,\cdot,\cdot) - g(t,\cdot,\cdot)\right\|_{L^2(M^{-1})}\ \leq\ K\,\|f_0-g_0\|_{L^2(M^{-1})}\, e^{-\theta_0\,\tau\, t}\,.
\]
\end{itemize}
\label{t:maindiff}
\et

Note that forcing by $\rho_*$ induces inhomogeneity in our lower bound on admissible $\delta$s.
\smallskip

Our second result concerns the regime of evanescent collisions, namely when $\tau$ is large.
\bt[Evanescent collisions; $\tau\gtrsim 1$]~\\
For any $\eps>0$, any $\tau_0>0$, any $R_0>1$ and any $\rho_*\in W^{1,p}(\T^2)$, with $p>2$, such that $\int_{\T^2}\rho_*=1$, there exist positive constants $\theta_0$ and $K$ such that for any $\tau\geq\tau_0$, any $\delta\,\geq\,K\,(1+R_0^{1/2})\,\tau^{7/15+\eps}$, 
\begin{itemize}
\item for any $f_0$ such that 
$$
\|f_0\|_{L^2(M^{-1})}\leq R_0
\qquad\textrm{and}\qquad 
\int_{\T^2\times\R^2}f_0=1\,,
$$
then Equation~\eqref{VPFPf} possesses a (unique strong) solution $f$ starting from $f_0$, and it satisfies for any $t\geq0$
\[
 \left\|f(t,\cdot,\cdot) - \finf(\cdot,\cdot)\right\|_{L^2(M^{-1})}\ \leq\ K\,\|f_0-\finf\|_{L^2(M^{-1})}\, e^{-\theta_0\,\frac{t}{\tau}}
\]
where $\finf$ solves \eqref{globmax}-\eqref{PoissonEmdenIntro};
\item for any $f_0$, $g_0$ such that 
$$
\|f_0\|_{L^2(M^{-1})}\leq R_0\,,\quad
\|g_0\|_{L^2(M^{-1})}\leq R_0
\qquad\textrm{and}\qquad 
\int_{\T^2\times\R^2}f_0=\int_{\T^2\times\R^2}g_0=1\,,
$$
then corresponding solutions $f$ and $g$ satisfy for any $t\geq0$
\[
 \left\|f(t,\cdot,\cdot) - g(t,\cdot,\cdot)\right\|_{L^2(M^{-1})}\ \leq\ K\,\|f_0-g_0\|_{L^2(M^{-1})}\, e^{-\theta_0\,\frac{t}{\tau}}\,.
\]
\end{itemize}
\label{t:maincoll}
\et

Some comments are in order.  
Previous theorems prove that the combination of the transport term that mixes space and velocity at typical time scale of size $1$ with the Fokker-Planck part that regularizes and dissipates in the velocity variable at time scale $\tau$ does lead to both decay and regularity in all variables, regularization being somewhat implicit in our statements, but clearly apparent in our proofs. This type of structure is actually the prototype of systems leading to hypocoercive decay to equilibrium \cite{Herau-Nier,Mouhot-Neumann,Villani_hypo,Duan_hypo-linear,Dolbeault-Mouhot-Schmeiser,dMRV} and hypoelliptic regularization \cite{hormander_1967_hypoelliptic,Kohn_hypo,Herau-Nier,Helffer-Nier_book,dMRV}. Regularization allows us to obtain exponential convergence starting from initial data in $L^2(M^{-1})$ and to prove Lipschitz dependence on the initial data from the norm topology of $L^2(M^{-1})$ to the norm topology of $L^\infty(\R_+;L^2(M^{-1}))$. The latter is somewhat in contrast with the analysis in \cite{Ha-Noh-stability-VPFP-frictionless}.
The presence of a constant $K$ (larger than $1$) in our exponential decay estimates reflects both the non purely dissipative nature of the nonlinear system under consideration and, since we start from $L^2$ initial data, our use of regularizing effects. Indeed, to state it in technical words, the dissipations of functionals involved in our proofs need some time to control enough regularity to prevent the nonlinearity from inducing some norm growth.

\begin{wrapfigure}{L}{.4\textwidth}
\centering
\includegraphics[width = .4\textwidth]{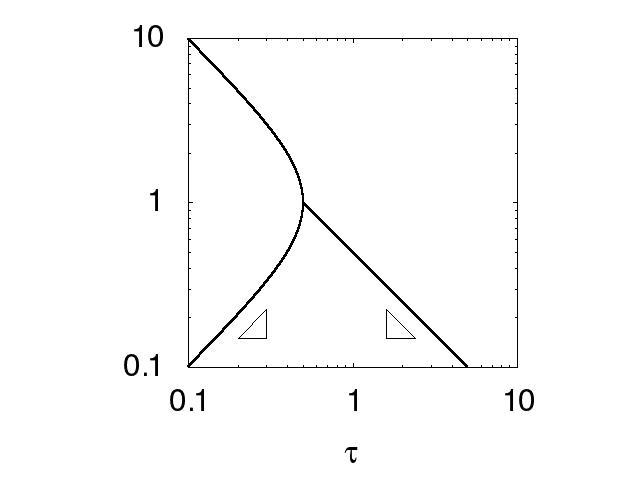}\\
\emph{Real part of eigenvalues of the toy model \emph{vs}. $\tau$, in logarithmic scale.}
\end{wrapfigure}
The dependence of decay rates in the collisional parameter $\tau$ also stems from the multi-scale anisotropic behavior of the system. As a simple but enlightening toy model consider the system of two ordinary differential equations  
$$
\begin{array}{rcl}
\hspace{.4\textwidth}X'(t)&=&V(t)\\
\hspace{.4\textwidth}V'(t)&=&-X(t) - \tau^{-1}V(t)
\end{array}
$$
that mixes $X$ and $V$ at scale $1$ and dissipates explicitly $V$ at scale $\tau$, mimicking respectively transport and collisions. For the toy model one readily checks that the rate of exponential decay to zero behaves as $1/(2\tau)$ in the limit $\tau\to\infty$ and as $\tau$ in the limit $\tau\to0^+$. This is consistent with our results for \eqref{VPFPf}. Also note that our decay rates are directly related to the spectral gap of self-adjoint operators
$
-\tau\,\Delta_x\,-\,\tau^{-1}\,\textrm{div}_{v} \left( v\cdot+\nabla_{v}\cdot\right)
$
on $L^2(M^{-1})$, that exhibit the same asymptotic behaviors.

\medskip

 To comment on constraints on $\delta$, let us start with a deliberately oversimplified analogy. Retaining from the foregoing discussion only decay rates and assuming that regularity is not an issue leads to the consideration of scalar differential equations $y'(t)\,=\, \frac{1+y(t)}{\delta^2}y(t)-\tau y(t)$ when $\tau\lesssim1$ and $y'(t)\,=\, \frac{1+y(t)}{\delta^2}y(t)-\frac{1}{\tau}y(t)$ when $\tau\gtrsim1$, the forcing by $y/\delta^2$ modeling in particular the effect of the inhomogeneity of $\rho_*$. For those equations the ball of center zero and radius $R_0$ is uniformly attracted to zero respectively when $\delta>\sqrt{1+R_0}\,\tau^{-1/2}$ and when $\delta>\sqrt{1+R_0}\,\tau^{1/2}$. 
Though those equations fail to provide relevant predictions for our system, they give an idea of the kind of conclusion that can be drawn when one first derives bounds for the semi-group evolution, without tracking anisotropic dependences in parameters, and then uses them at the nonlinear level. 
In the regime where $\tau\lesssim1$ it is obvious that the foregoing scalar equation is way too pessimistic to be relevant since it overlooks the dissipation in $v$ at rate $\tau^{-1}$ that helps to bound any term that involves a derivative in $v$, including all contributions from the electric field, hence all nonlinear terms ! It is a bit more surprising that it also fails to predict an accurate threshold in the limit $\tau\to\infty$.

It may be worth stressing that the necessity to impose $\delta\to\infty$ when $\tau\to\infty$ is a short-time/nonlinear constraint and that for the evolution linearized about $\finf$ one may obtain uniform estimates assuming only that $\delta$ is large uniformly with respect to $\tau\in(0,\infty)$. This follows directly from Proposition~\ref{p:remcolllin} and contrasts with Proposition~\ref{p:remcoll}.

To try a comparison with previous analyses involving an explicit discussion on the parameter $\tau$, let us extrapolate that when $\rho_*\equiv1$ our constraint could be turned into $\|f_0-\finf\|/\delta^2\leq K_\eps\,\min(1,\tau^{-(\tfrac{14}{15}+\eps)})$ for any $\eps>0$ and some $K$, where $\|\cdot\|$ is a suitable norm. As already pointed out, in the limit $\tau\to\infty$ this compares unfavorably with the recent analysis of Bedrossian \cite{Bedrossian_VPFP} that only requires $\|f_0-\finf\|\leq K_\delta\, \tau^{-1/3}$ (with a norm encompassing strong regularity however) when $\tau\lesssim1$, and whose cornerstones are Landau damping and mixing-enhanced dissipation --- a priori not available in our context --- that leads to a dissipation in $\tau^{-1/3}$, instead of $\tau^{-1}$, for inhomogeneities in the $x$ variable. 
In the limit $\tau\to0$ it would compare however favorably with the analysis by Jin and Zhu \cite{jin_2017_hypocoercivity}, that also appeared during the finalization of the present contribution and that requires $\|f_0-\finf\|/\delta^2\leq K\,\tau$ when $\tau\lesssim1$ (in a norm also requiring strong regularity of the initial data). In particular their result does not allow to take limits when $\tau\to0$ for a fixed nontrivial pair $(f_0,\delta)$. 
However, for precision's sake let us mention that the primarily focus of \cite{jin_2017_hypocoercivity} is on quantifying propagation of uncertainty in the initial data. 

\medskip

Now we turn to asymptotic regimes. In the diffusive regime, constraints on $\delta$ are uniform with respect to $\tau$. Therefore for any sufficiently large fixed $\delta$ one may examine model reductions in the limit $\tau\to0$. Those depend on the typical time scale chosen to observe the solution. Therefore we introduce an observation time $\tref\equiv \tref(\tau)$ and scale $f$ accordingly to obtain $\fref$ through $\fref(t,x,v)=f(\tref\,t,x,v)$. The following diagram may help the reader to visualize remarkable time scales appearing in our last main result.

\medskip
    \begin{tikzpicture}
        \def \lref{\linewidth}
     \coordinate (P1) at (-.4\lref,0);
     \coordinate (P12) at (-.3\lref,0);
     \coordinate (P2) at (-.2\lref,0);
     \coordinate (P23) at (-.05\lref,0);
     \coordinate (P3) at (.1\lref,0);
     \coordinate (P34) at (.25\lref,0);
     \coordinate (P4) at (.4\lref,0);
     \draw[->,thick] (P1)--(P4) node[right]{$t_\text{ref}(\tau)$};
     \draw (P12) node[above]{$\mathbf{(v)}$};
     \draw (P2) node{\tiny|} node[below=.3em]{$\tau$} node[above]{$\mathbf{(iv)}$};
     \draw (P23) node[above]{$\mathbf{(iii)}$};
     \draw (P3) node{\tiny|} node[below=.3em]{$1/\tau$} node[above]{$\mathbf{(ii)}$};
     \draw (P34) node[above]{$\mathbf{(i)}$};
     \draw [thick,decoration={ brace,mirror,raise=0.05\lref },decorate] (-.4\lref,0) -- (.08\lref,0);
     \draw [thick,decoration={ brace,mirror,raise=0.05\lref },decorate] (.12\lref,0) -- (.39\lref,0);
     \draw [thick,decoration={ brace,raise=0.05\lref },decorate] (-.4\lref,0) -- (-.22\lref,0);
     \draw [thick,decoration={ brace,raise=0.05\lref },decorate] (-.18\lref,0) -- (.39\lref,0);
     \draw (-.3\lref,3.5em) node[above]{Frozen};
     \draw (-.3\lref,2.5em) node[above]{initial data};
     \draw (.1\lref,2.5em) node[above]{Asymptotically thermalized regimes};
     \draw (-.15\lref,-2.5em) node[below]{Asymptotically free-field};
     \draw (-.15\lref,-3.5em) node[below]{linear regimes};
     \draw (.25\lref,-2.5em) node[below]{Asymptotically};
     \draw (.25\lref,-3.5em) node[below]{steady regime};
     \draw[->,thick] (.1\lref,-2em)--(.1\lref,-6em) node[below]{Nonlinear low-field};
     \draw[->,thick] (-.2\lref,2em)--(-.2\lref,6em) node[above]{Fokker-Planck};
     \draw (-.2\lref,7em) node[above]{Homogeneous};
     \draw (.1\lref,-7em) node[below]{regime};
     \draw (P1) node{\tiny|} node[below=.5em]{$0$};
    \end{tikzpicture} \\
    
    \vspace{2em}
    
\bt[Diffusive limits; $\tau\to0$]$ $\\
\label{t:asymptotics}
Let us denote $\fref(t,x,v)=f(\tref(\tau)\,t,x,v)$ the rescaled solution provided by Theorem~\ref{t:maindiff}.\\
\begin{itemize}
\item[\bf (i).] With $K$ depending only on $R_0$, $\rho_*$ and $\tau_0$, there holds for any $t\geq0$ and $\tau\leq\tau_0$
\[
 \left\|\fref(t,\cdot,\cdot) - \finf(\cdot,\cdot)\right\|_{L^2(M^{-1})}\ \leq\ K\,\|f_0-\finf\|_{L^2(M^{-1})}\, e^{-\theta_0\,\tref\tau\, t}
\]
so that, in $L^2(M^{-1})$, uniformly with respect to times $t$ taken in compacts of $(0,\infty]$,
$$
\fref(t,\cdot,\cdot)
\stackrel{\tau\to0}{\longrightarrow}\finf
$$
provided $\tref(\tau)\,\tau\stackrel{\tau\to0}{\to}\infty$.

\smallskip

\item[\bf (ii).] When $\tref(\tau)=\tau^{-1}$, with $K$ depending only on $R_0$, any $\rho_*\in W^{2,p}(\T^2)$, with $p>2$, and $\tau_0$, there holds for any $\tau\leq\tau_0$
\[
 \left\|\fref-\rhoref\,M\right\|_{L^2(\R_+;L^2(M^{-1}))}\ \leq\ K\,\tau\,\|f_0-\finf\|_{L^2(M^{-1})}\stackrel{\tau\to0}{\longrightarrow}0
\]
with $\rhoref=\int_{\R^2}\fref(\cdot,\cdot,v)\emph{\dD} v$, and, in $L^\infty(\R_+;L^2(\T^2))$,
$$
\rhoref(t,\cdot,\cdot)
\stackrel{\tau\to0}{\longrightarrow}\rhoas
$$
where $\rhoas\in L^\infty(\R_+;L^2(\T^2))\cap  L^2(\R_+;H^1(\T^2))$ is the unique strong solution to the drift-diffusion equation 
\be
\left\{
\begin{array}{l}
\d_t \rhoas + \textrm{div}_x(\Eas\ \rhoas-\nabla_x\rhoas) \,=\, 0\,,\\[.75em]
\Eas\,=\,\delta^{-2}\nabla_x \Delta_x^{-1}(\rhoas-\rho_*)\,,
\end{array}
\right.
\label{driftdiff}
\ee
starting from $\rho_0=\int_{\R^2}f_0(\cdot,v)\dD v\in L^2(\T^2)$.

\smallskip

\item[\bf (iii).] In $L^2(M^{-1})$ uniformly with respect to times $t$ taken in compacts of $(0,\infty)$,
$$
\fref(t,\cdot,\cdot)
\stackrel{\tau\to0}{\longrightarrow}\rho_0\,M
$$
with $\rho_0=\int_{\R^2}f_0(\cdot,v)\emph{\dD} v$ provided $\tref(\tau)\,\tau\stackrel{\tau\to0}{\to}0$ and $\tref(\tau)\tau^{-1}\stackrel{\tau\to0}{\to}\infty$\,.

\smallskip

\item[\bf (iv).] When $\tref(\tau)=\tau$, in $L^2(M^{-1})$, uniformly with respect to times $t$ taken in compacts of $[0,\infty)$,
$$
\fref(t,\cdot,\cdot)
\stackrel{\tau\to0}{\longrightarrow}\fFP
$$
where $\fFP$ is the unique strong solution to the homogeneous Fokker-Planck equation 
\be
\d_t \fFP\ =\ \textrm{div}_v(v\,\fFP+\nabla_v \fFP)\,,
\label{hFP}
\ee
starting from $f_0$.\\
\item[\bf (v).] In $L^2(M^{-1})$, uniformly with respect to times $t$ taken in compacts of $[0,\infty)$,
$$
\fref(t,\cdot,\cdot)
\stackrel{\tau\to0}{\longrightarrow}f_0
$$
provided $\tref(\tau)\tau^{-1}{\to}0$ when $\tau\to0$\,.
\end{itemize}
\et
Note that our Lipschitz dependence on initial data also allows us to replace the fixed initial data of the previous theorem with a $\tau$-dependent family converging to $f_0$ in $L^2(M^{-1})$ as $\tau\to0$.

In this theorem we prove strong convergence in each particular regime. In the asymptotically steady regime \textbf{(i)} we prove exponential convergence to the global Maxwellian uniformly with respect to $f_0$. Here uniformity in time necessarily excludes a neighborhood of initial time since the asymptotic limit loses any trace of the initial data. Likewise, in regimes \textbf{(ii)} and \textbf{(iii)} part of the initial data is asymptotically lost so that uniformity near initial time may hold for the asymptotics of $\rhoref$ but not for $\fref$. There is a threshold $\tref(\tau)=\tau^{-1}$ between asymptotic regimes where nonlinearity due to field effects play a role (\textbf{(i)} and \textbf{(ii)}) and the asymptotically field-free linear regimes  (\textbf{(iii)},  \textbf{(iv)} and \textbf{(v)}). For the latter regimes, moreover, the macroscopic density $\rho$ is asymptotically stuck to its initial data, which prevents uniformity in time to hold up to infinite time. Actually those three regimes may be understood via the fact that solutions are asymptotically close to the family of solutions $\tf$ to homogeneous Fokker-Plank equation
$$
\frac{\tau}{\tref(\tau)}\d_t \tf\ =\ \textrm{div}_v(v\,\tf+\nabla_v \tf)\,,
$$
starting from $f_0$. As for the foregoing linear case, our proof does provide convergence rates for those three asymptotically linear regimes when one assumes more regularity on initial data.

At the threshold  \textbf{(ii)} appears the most interesting limit, known as the low-field, parabolic or 
diffusion limit\footnote{In contrast we use the term for any limit corresponding to sending the diffusive parameter to infinity.}. 
Here the asymptotic dynamics for $\rhoref$ is non-linear and capable of connecting initial data to large-time equilibrium, resulting in estimates uniform with respect to time in $[0,\infty]$. Note moreover that System \eqref{driftdiff} inherits the same properties of exponential convergence and uniform stability with respect to initial data as \eqref{VPFPf}. As for the large-time limit, our main goal here is not to prove that the limit holds but to provide strong convergence in the same $L^2$ space where the initial data is taken and uniform in time. Indeed the present limit has been extensively investigated over the years, with first results obtained by Poupaud and Soler \cite{poupaud_2000_parabolic}, then improved by Goudon   \cite{goudon_2005_hydrodynamic} and extended to higher dimensions \cite{el_2010_diffusion} and multiple-species dynamics \cite{wu_2014_diffusion, herda_2016_massless}. In particular, for various variants of \eqref{VPFPf}, the limit is known to hold for initial data with finite mass, finite energy and finite entropy (plus one moment in velocity when the system is set on $\R^2$). Yet the convergence of $\rho$ proved there is local in time and only weak in the spatial variable, that is, it is proved in $L^\infty_{loc}((0,\infty);L^1-weak)$. In particular those convergence results cannot be used to transfer the large-time behavior of the limiting \eqref{driftdiff} to the original \eqref{VPFPf}. In contrast, assuming here that $\delta$ is sufficiently large, uniformly with respect to $\tau$, we prove that the convergence of $\rhoref$ holds in $L^\infty(\R_+;L^2(\T^2))$ assuming that $f_0\in L^2(M^{-1})$. Moreover, as for asymptotically linear regimes, assuming more regularity on the initial data, our proof also provides uniform convergence rates.

\subsection*{Perspectives}

To conclude this introduction, we draw now a few perspectives. 

\subsubsection*{Optimality} Probably the most interesting question left open by our analysis is the question of optimality of the constraint on $\delta$. Unfortunately our analysis provides almost no hint concerning possible scenarios for transient growth without bound on bounded sets of initial data.

\subsubsection*{Higher dimensions} Our strategy, consisting mainly in optimizing in parameters classical hypocoercive/hypoelliptic estimates, is robust enough to be adapted to most situations, with different outcome however, the optimization requiring a case-by-case analysis. It is important to note yet, that, in dimension $3$, our analysis would not allow us to start from $L^2$ initial data. This is due to insufficiently strong regularization mechanisms. To give a few insights, hold in mind that as encoded in our functionals, regularization of one space derivative costs initial blow-up $t^{-3/2}$ and of one velocity derivative costs $t^{-1/2}$ blow-up, that $E$ is one derivative smoother than $h$ in spatial variables, and that $H^s(\T^d)$ is embedded in $L^\infty(\T^d)$ provided $s>d/2$. By using these elements an analysis that would treat $E\cdot A^*h$ as a perturbative term, starting from $L^2$ initial data, as we do, would require $t\mapsto t^{(3/2)\times((d/2)+\eta-1)+1/2}$ to be locally integrable (for some $\eta>0$) which is possible only if $d=2$ and $0<\eta<1/3$. The same heuristic suggests that in dimension $3$ one needs to start with $a$ spatial derivatives and $b$ velocity derivatives with $(a,b)$ so that $3a+b>1/2$. See \cite{herau_2016_global} for some related discussions and a $3$-dimensional analysis starting with more than $1/2$-derivatives in all variables.

\subsubsection*{Whole space} We anticipate that an extension of our analysis to the whole space $\R^2$ in the presence of a confining potential, as in \cite{herau_2016_global}, would not require significant changes. More interesting would be an extension to $\R^2$ for the same system, as in \cite{hwang_2013_vlasov}. It may seem in contradiction with the fact that at several places we use a spectral gap argument for $-\Delta_x$ on $\T^2$ in the form of Poincar\'e inequalities. To bypass this difficulty one needs to be able to include lower-order terms in the dissipation to provide a direct control on the macroscopic density $n$ (without using $\nabla_x n$). At least when $\rho_*\equiv1$, the coupling by the Poisson equation allows us to carry out this plan. Indeed, a direct computation shows that
$$
\int E\cdot\nabla_x n\,=\,-\frac{1}{\delta^2}\|\nabla_x (-\Delta_x)^{-\tfrac12}n\|_{L^2}^2\,=\,-\frac{1}{\delta^2}\|n\|_{L^2}^2 
$$
and, in the case where $\rho_*\equiv1$, the foregoing term appears in the time derivative of
$$
\int j\cdot\nabla_x n
$$
so that we only need to add those two terms to our functionals and correctly tune parameters. Involved extra computations are expected to be similar to those performed in Section~\ref{s:asymptotics}. Interestingly enough, the approach shares some similarities with the method developed by Dolbeault, Mouhot and Schmeiser \cite{Dolbeault-Mouhot-Schmeiser} to obtain hypocoercive estimates in $L^2$ for equations that do not exhibit hypocoercive regularization. Indeed for the kinetic Fokker-Planck equation $\d_tf+v\cdot\nabla_xf=\textrm{div}_v(v\,f+\nabla_v f)$ their method consists in adding 
$$
\int j\cdot (\I-a\Delta_x)^{-1}\nabla_x n\,,
$$
(where $a$ is some explicit universal constant) to the usual $L^2$ energy functional. 

\subsubsection*{Other extensions} Many other extensions would require a similar case study : other collisional operators, multiple-species dynamics, coupling with magnetic fields \emph{etc.}

\subsubsection*{High-field, hyperbolic limit}  To complete the picture, let us discuss another famous asymptotic limit of \eqref{VPFPf}, the so-called high-field or hyperbolic limit. It consists in taking the limit $\tau\to0$ with $\delta^2=\tau^{-1}$ on a time scale $\tref$ independent of $\tau$. In this regime nonlinear terms dominate diffusive effects and we are asymptotically lead to an hyperbolic equation on the macroscopic density. The limit is now rigorously established \cite{nieto_2001_high, goudon_2005_multidimensional}. 
But the regime is somewhat orthogonal to our focus since it brings us to an equation that could lead to shock formation in finite time. It seems rather unclear if our approach could improve anything to our understanding of this limit. Yet it would be interesting to gain some insight on what happens asymptotically for quasineutral parameters of intermediate size, between the low-field and high-field regimes, or, even for the high-field regime, to elucidate what happens on other time scales. Any progress is likely to require a significantly more nonlinear version of our arguments, for instance involving the time-dependent measure of density $\eD^{-\phi(t,\cdot)}\,M$ instead of the stationary $\finf$.

\subsection*{Outline}
The rest of the paper is devoted to proofs of our three main theorems. 
In the next section, we introduce notational conventions and expound our strategy. 
In Section~\ref{p:PoissonBoltzmann}, we investigate the well-posedness of Equation~\eqref{PoissonEmden} and gather estimates on its solution $\phiinf$, which provides estimates on the steady state $\finf$. Then, in Section~\ref{s:Poisson}, we glean estimates on $E$ in terms of $h$ when it is obtained from the Poisson equation of \eqref{VPFPabstract}. In Section~\ref{s:frozen}, we gather some preliminary pieces of information on solutions to the system obtained by freezing nonlinear terms. In Section~\ref{s:linear}, to support our choice of exponents $\beta$ in the functional $\cF$, we close estimates for $\d_th+L_\tau h=0$, a $\delta=\infty$ version of \eqref{VPFPabstract}. Theorems~\ref{t:maindiff} and \ref{t:maincoll} are then proved in Section~\ref{s:strong-collisions} and \ref{s:evanescent}. Finally, the last section is devoted to the proof of Theorem~\ref{t:asymptotics}.

\section{Strategy}\label{s:strategy}

We now provide some more details on our strategy. To begin with, though we do consider initial data not necessarily close to $\finf$, it is convenient for this forthcoming analysis to write solutions in a seemingly perturbative form by introducing the following new unknowns
 \[
h = \frac{f-f_\infty}{f_\infty},\qquad\qquad
\psi = \phi - \phiinf\,.
 \]
In terms of $(h,\psi)$, System~\eqref{VPFPf} becomes 
\be
\d_t h + v\cdot\nabla_xh - \nabla_x\phiinf\cdot\nabla_vh + \frac{1}{\tau}
\,(v-\nabla_v)\cdot\nabla_vh + \nabla_x\psi\cdot v\ =\  \nabla_x\psi\cdot(\nabla_v-v)h
 \label{VPFPh}
\ee
coupled with the Poisson equation
\be
-\delta^2\Delta \psi\ =\ n\,,\label{Poisson}
\ee
where the source is given by
\[
n\  =\ \int_{\RM^2} h\ \finf \dD v\,.
\]

Once sufficient bounds have been obtained on $\phiinf$, one may rightfully replace in all our statements canonical norms of $L^2(M^{-1})$ with the equivalent norm arising from its interpretation as $\cH:=L^2(\T^2\times\RR^2, \mu)$ where $\mu$ is the measure with probability density function $\finf$. The space $\cH$ is thus endowed with its canonical scalar product
\[
 \lla\cdot,\cdot\rra:\,(f,g)\longmapsto \int_{{\T^2\times\RR^2}} f\,g \,\dD\mu.
\]
and we denote by $\|\cdot\|$ the corresponding norm. The main advantage is that now at the linearized level the transport term becomes skew-symmetric for the new structure whereas the Fokker-Planck operator remains symmetric. We stress that $h_0\finf$ is mean-free and so will remain $h(t,\cdot,\cdot)\finf$ for later times $t$. Hence we introduce the following subspace of $\cH$
\[
 \cH_0\ =\ \left\{h\in\cH\quad\text{such that}\quad
\lla\funcun;h\rra\,=\,\int_{\T^2\times\RR^2}\,h\,\dD\mu\, =\, 0\right\}
\]
where $\funcun$ denotes the constant function with value $1$.

As is customary in the field, in particular following the memoir of Villani \cite{Villani_hypo}, we shall write estimates proving hypocoercivity using an abstract formulation of the equations. In order to do so we introduce the following unbounded operators on $\cH$
\[
\begin{aligned}
&A&=&&&\nabla_v\,,\\
&B&=&&&v\cdot\nabla_x -(\nabla_x\phiinf)\cdot\nabla_v\,.
\end{aligned}
\]
Let us mention that in order to clarify computations using vectors or higher-order tensors we sometimes use Einstein summation convention on repeated indices.
In this way, we also introduce $L_\tau$ defined as
\[
 L_\tau\ =\ \frac{1}{\tau}A_i^*A_i + B
\]
where we use the superscript ${ }^*$ to denote coordinate-wise formal adjoint in $\cH$.
On this example our convention explicitly reads
\[
A^*\,=\,v-\nabla_v\,,\qquad
B^*\,=\,-B\,.
\]
The perturbative form \eqref{VPFPh}-\eqref{Poisson} is then equivalently written
\be
\left\{
\begin{array}{l}
\ds
\d_th\,+\,L_\tau h \,-\, E\cdot A^*(\funcun)\ =\ E\cdot A^*h\,,\\
\,\\
\ds E=\delta^{-2}\nabla_x\Delta_x^{-1}n\,,\qquad
n\  =\ \int_{\RM^2} h\ \finf \dD v\,.
 \end{array}
\right.
 \label{VPFPabstract}
\ee
In this abstract form preservation of mass follows from $A(\funcun)=0$, $B^*(\funcun)=0$, and the non-linear part of the system lies on the right-hand side of the first equation.

Commutators play a crucial role in the analysis so that we define and evaluate
\[
\begin{array}{rcccl}
C& = &[A,B] &=& \nabla_x\,,\\
&&[B,C] &=& \textrm{Hess}(\phiinf)\nabla_v\,,\\
&&[A_i,A^*_j] &=&\delta_{ij}\,,
\end{array}
\]
where $\textrm{Hess}(\phiinf)$ is the Hessian matrix of $\phiinf$ and $\delta_{ij}$ is the Kronecker symbol. At the linearized level, good dissipative terms arise from
$$
\lla h;L_\tau h\rra=\frac1\tau\|Ah\|^2
\qquad\textrm{and}\qquad
\lla ABh;Ch\rra +\lla Ah;CBh\rra
=\|Ch\|^2-\lla Ah;\textrm{Hess}(\phiinf)Ah\rra
$$
that are involved in computations of time derivatives of respectively $\|h\|^2$ and $\lla Ah,Ch\rra$, when $h$ solves~\eqref{VPFPabstract}. Incidentally, we point out that, for $K,L$ two vector-valued operators, we shall repeatedly use $KL$ to denote the matrix-valued operator with coefficients $K_iL_j$. For instance the operator yielding the Hessian in the velocity variable is denoted $AA$ or $A^2$.

\medskip

We will prove all parts of Theorems~\ref{t:maindiff} and~\ref{t:maincoll} --- existence, uniqueness, stability with respect to initial data, regularization and exponential convergence --- at once by interpreting \eqref{VPFPabstract} as the research of a fixed point for a strict contraction on a functional space that encodes regularization and decay and that quantifies precisely dependences on $\tau$. This function space is designed from functionals $\cE_{\gamma,\,\beta,\,\tau,\,\delta}$ and $\cF_{\gamma,\,\beta,\,\tau,\,\delta}^{\theta}$ built as follows. First we consider the following weighted Sobolev norm
\[
\begin{aligned}
&\nrm h \nrm_{\gamma,\,\beta,\,\tau,\,t}^2&=&&&\|h\|^2 
 + \gamma_1\tau^{\beta_1}\min\left(1,\tfrac{t}{\tau}\right)\|Ah\|^2 \\
 &&&+&& \gamma_2\tau^{\beta_2}\min\left(1,\tfrac{t}{\tau}\right)^{3}\|Ch\|^2 
 + 2\gamma_3\tau^{\beta_3}\min\left(1,\tfrac{t}{\tau}\right)^{2}\lla Ah, Ch\rra,
 \end{aligned}
\]
and a corresponding dissipation
\[
\begin{aligned}
&D_{\gamma,\,\beta,\,\tau,\,t}(h)&=&&&
\tau^{-1}\|Ah\|^2 
+ \gamma_1\tau^{\beta_1-1}\min\left(1,\tfrac{t}{\tau}\right)\|A^*\cdot Ah\|^2\\ 
&&&+&& \gamma_2\tau^{\beta_2-1}\min\left(1,\tfrac{t}{\tau}\right)^{3}\|ACh\|^2 
+ \gamma_3\tau^{\beta_3}\min\left(1,\tfrac{t}{\tau}\right)^{2}\|Ch\|^2.
\end{aligned}
\]
The presence of cross terms in $\nrm h \nrm_{\gamma,\,\beta,\,\tau,\,t}$ is related to the above mentioned commutator computation. Let us also mention that we have chosen to use weights with pure powers of $\tau$ instead of, for instance, some minima of two powers, principally to facilitate reading. However it forces us to split the discussion between regimes $\tau\gtrsim1$ and $\tau\lesssim1$. 
We mention that after the completion of our work, Cl\'ement Mouhot pointed to us that a similar strategy has been used recently by Briant \cite{briant_2015_boltzmann} to quantify hydronamic limits starting from the Boltzmann equation. 

Note that an estimation of $\nrm h(t,\cdot,\cdot) \nrm_{\gamma,\,\beta,\,\tau,\,t}$ in terms of $\nrm h_0 \nrm_{\gamma,\,\beta,\,\tau,\,0}=\|h_0\|$ when $h$ solves~\eqref{VPFPabstract} would encode hypoelliptic regularization. Powers of the time variable should be appreciated with this in mind as they are associated with classical gain of regularity afforded by the kinetic Fokker-Planck operator; see for instance \cite[Appendix~A.21]{Villani_hypo}. Furthermore, if one proves that solutions to~\eqref{VPFPabstract} satisfy for any $t_2\geq t_1\geq \tau$
$$
\nrm h(t_2)\nrm_{\gamma,\,\beta,\,\tau,\,t_2}^2
\,+\,\theta\,\int_{t_1}^{t_2}\,D_{\gamma,\,\beta,\,\tau,\,s}(h(s))^2\dD s\,\leq\,\nrm h(t_1)\nrm_{\gamma,\,\beta,\,\tau,\,t_1}^2
$$
for some $\theta>0$, then by using Poincar\'e inequality one deduces exponential decay with rates scaling as $\min(\tau^{-1},\tau^{\beta_3},\tau^{\beta_3-\beta_2})$. Hence closing such form of estimates will prove at the same time hypoelliptic regularization and hypocoercive decay.

In Section~\ref{s:linear} we first show what choices of parameters $\gamma$ and $\beta$ are available for the reduced toy system $\d_th+L_\tau h=0$ that may be thought as a $\delta=\infty$ version of System~\eqref{VPFPabstract}. We may then analyze for the original problem what is, among available parameters for the toy system, the optimal choice to relax as much as possible the constraint on $\delta$ and still obtain the same entropy/dissipation relations. Since the way in which we prove corresponding nonlinear a priori estimates lends itself to a strict contraction formulation this will lead to Theorems~\ref{t:maincoll} and~\ref{t:maindiff}. Actually in our study of \eqref{VPFPabstract} we rather use
\[
 \cE_{\gamma,\,\beta,\,\tau,\,\delta,\,t}(h) = \nrm h \nrm_{\gamma,\,\beta,\,\tau,\,t}^2 + \delta^2(1 + \gamma_1\tau^{\beta_1}\min\left(1,\tfrac{t}{\tau}\right))\|E\|_{L^2}^2
\]
to offer a better account of electric-field contributions. Our goal is then essentially to build solutions such that 
$$
\cF_{\gamma,\,\beta,\,\tau,\,\delta}^{\theta}(h)\,\leq\,K\,\|h_0\|^2
$$
for some constant $K$, where 
\be
\cF_{\gamma,\,\beta,\,\tau,\,\delta}^{\theta}(h)\ =\ \cE_{\gamma,\,\beta,\,\tau,\,\delta}(h) + \theta\,D_{\gamma,\,\beta,\,\tau}(h)
\label{func}
\ee
with 
\[
\cE_{\gamma,\,\beta,\,\tau,\,\delta}(h) \ =\ \sup_{t\geq0}\, \cE_{\gamma,\,\beta,\,\tau,\,\delta,\,t}(h(t,\cdot,\cdot)),
\]
and
\[
D_{\gamma,\,\beta,\,\tau}(h)\ =\ \int_0^\infty D_{\gamma,\,\beta,\,\tau,\,t}(h(t,\cdot,\cdot))\,\dD t
\]
for a suitable choice of $\theta>0$, $\beta\in\R^3$ and  $\gamma\in(0,+\infty)^3$ under the weakest possible constraint on $\delta$ and uniformly with respect to $(\tau,h_0)$ taken in relevant spaces. It turns out that our choices are $\beta=(0,2,1)$ when $\tau\lesssim1$ and $\beta=(-8/15,2/5,-1/15)$ when $\tau\gtrsim1$, and we amply comment on motivations of these choices along the proof. We stress however here that the latter choice differ from the choice $\beta=(-1,-1,-1)$ optimal for the linearized dynamics when $\tau\gtrsim1$.

Once Theorem~\ref{t:maindiff} and~\ref{t:maincoll} have been proved, corresponding estimates or higher-order versions of those may be used to bound error terms in diffusive asymptotics, leading to Theorem~\ref{t:asymptotics}.

\section{The Poisson-Boltzmann equation}\label{p:PoissonBoltzmann}

In this section we provide well-posedness and regularity results for 
 \be
  -\delta^2\Delta_{x}\phiinf = e^{-\phiinf} - {\rho_*}\,.
  \label{PoissonEmden}
 \ee
Consistently with our global analysis we insist on uniformity of estimates with respect to $\delta$ when $\delta$ is bounded away from zero. This turns out to be crucial so as to control all our norms and relative inequalities since they depend on $\delta$ through $\phiinf$.

\medskip

As a key example note the following form of the Poincar\'e inequality in $\cH_0$.
\begin{proposition}\label{prop:est-poincare}
There exists a positive constant $K$ such that for any $h\in\cH_0$ and any $\delta>0$, one has
$$
\|h\|^2
\,\leq\,K\,\|e^{\phiinf}\|_{L^\infty(\T^2)}\,\|e^{-\phiinf}\|_{L^\infty(\T^2)}\left(\|Ah\|^2+\|Ch\|^2\right)\,.
$$ 
\end{proposition}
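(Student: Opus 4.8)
The plan is to tensorize the Gaussian Poincar\'e inequality in the velocity variable with a weighted Poincar\'e inequality in the space variable, the point being that the spatial weight $e^{-\phiinf(x)}$ does not depend on $v$. First I would introduce the velocity-averaging operator $\Pi h(x)=\int_{\R^2}h(x,v)\,M(v)\,\dD v$ and the decomposition $h=\Pi h+(h-\Pi h)$. Because $e^{-\phiinf}$ is a function of $x$ alone, this decomposition is orthogonal in $\cH$, so that $\|h\|^2=\|\Pi h\|^2+\|h-\Pi h\|^2$, with $\|\Pi h\|^2=\int_{\T^2}|\Pi h|^2\,e^{-\phiinf}\,\dD x$ (using $\int_{\R^2}M=1$). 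Moreover, by definition of $\cH_0$, the constraint $\lla\funcun;h\rra=0$ is exactly $\int_{\T^2}(\Pi h)\,e^{-\phiinf}\,\dD x=0$.

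For the microscopic part, I would apply for a.e. fixed $x$ the Poincar\'e inequality for the standard Gaussian $M$ on $\R^2$ (whose spectral gap equals $1$), namely $\int_{\R^2}|h(x,\cdot)-\Pi h(x)|^2 M\leq\int_{\R^2}|\nabla_v h(x,\cdot)|^2 M$, then multiply by $e^{-\phiinf(x)}$ and integrate in $x$; since $\nabla_v(h-\Pi h)=\nabla_v h=Ah$, this gives $\|h-\Pi h\|^2\leq\|Ah\|^2$, with no weight factor needed. For the macroscopic part, set $u:=\Pi h\in H^1(\T^2)$; differentiating under the integral gives $\nabla_x u=\Pi(\nabla_x h)$, so Jensen's inequality yields $\int_{\T^2}|\nabla_x u|^2 e^{-\phiinf}\,\dD x\leq\|Ch\|^2$. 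It then remains to establish the following weighted Poincar\'e inequality on the torus: if $w:=e^{-\phiinf}$ (which satisfies $\int_{\T^2}w=1$ by the lemma stated in the introduction) and $\int_{\T^2}uw=0$, then $\int_{\T^2}|u|^2 w\leq K'\,\|e^{\phiinf}\|_{L^\infty}\|e^{-\phiinf}\|_{L^\infty}\int_{\T^2}|\nabla_x u|^2 w$ for a constant $K'$ depending only on the unweighted Poincar\'e constant $C_{\T^2}$ of the flat torus.

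To prove that inequality I would compare $u$ with its unweighted mean $\bar u:=\int_{\T^2}u\,\dD x$: the flat-torus Poincar\'e inequality gives $\int_{\T^2}|u-\bar u|^2 w\leq\|w\|_{L^\infty}\|w^{-1}\|_{L^\infty}C_{\T^2}\int_{\T^2}|\nabla_x u|^2 w$, while the constraint $0=\int uw$ rewrites as $\bar u=-\int(u-\bar u)\,w$, whence $\bar u^2\leq\int_{\T^2}|u-\bar u|^2 w$ by Cauchy--Schwarz and $\int w=1$; integrating $|u|^2\leq 2|u-\bar u|^2+2\bar u^2$ against $w$ then closes it. Putting the two parts together, and using $\|e^{\phiinf}\|_{L^\infty}\|e^{-\phiinf}\|_{L^\infty}=e^{\textrm{osc}\,\phiinf}\geq 1$ to subsume the bare $\|Ah\|^2$ term into the prefactor, yields the claim with $K$ depending only on $C_{\T^2}$ (in particular independent of $\delta$ and $\rho_*$); the reduction to smooth $h$ is a routine density argument. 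There is no serious obstacle here; the only point requiring a little care is that the mean-zero condition built into $\cH_0$ is imposed against the weighted measure $\finf\,\dD x\,\dD v$ rather than against Lebesgue measure, so it cannot be plugged directly into the flat-torus Poincar\'e inequality --- the splitting off of $\bar u$ and the estimate $\bar u^2\lesssim\int|u-\bar u|^2 w$ are precisely what reconcile the two.
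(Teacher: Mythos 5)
Your argument is correct and is essentially the proof the paper has in mind: the paper simply invokes the tensorization of the classical Poincar\'e inequality on $\T^2$ with the Gaussian Poincar\'e inequality (referring to Bakry et al.\ for details), and your macroscopic/microscopic decomposition $h=\Pi h+(h-\Pi h)$, with the fiberwise Gaussian inequality and the bounded-perturbation factor $\|e^{\phiinf}\|_{L^\infty}\|e^{-\phiinf}\|_{L^\infty}$ in the weighted spatial Poincar\'e inequality, is exactly how that tensorization is carried out. Your handling of the weighted mean-zero condition through the unweighted mean $\bar u$ just fills in the one detail the paper leaves implicit.
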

\noindent The foregoing inequality is actually a straightforward consequence of the tensorization of the classical Poincaré inequality on the torus with the Gaussian Poincaré inequality. The reader is referred to \cite[Chapter~4]{bakry_2013_analysis} for a detailed argument.

\medskip

In the present section our arguments are relatively classical and strongly echo those in \cite{bouchut_1991_global,Dolbeault_stationary-states,CDMS} and even more those in \cite[Section~3]{herda_2016_anisotropic}. In particular the existence part follows by identifying~\eqref{PoissonEmden} with an Euler-Lagrange equation. In order to do so we set $n_h=\rho_*-1$ so that $n_h$ is mean-free and introduce the functional 
$$
J(\phi)\ =\ \frac{\delta^2}{2}\int_{\TT^2}|\nabla_x\phi|^2\,+\,\int_{\TT^2}\phi\,n_h\,+\,
\ln\left(\int_{\TT^2}e^{-\phi}\right)
$$
on 
$$H_0=\left\{\ \phi\in H^1(\TT^2)\ \middle|\ \int_{\TT^2}\phi=0\ \right\}\,.$$
The functional $J$ is coercive, bounded from below and strictly convex provided that $\rho_*\in H^{-1}(\TT^2)$. The main observation leading to strict convexity is that from the Holder inequality stems for any $\theta\in[0,1]$ and $\phi_1, \phi_2\in H_0$,
	\[
	\int_{\T^2} e^{-\theta\phi_1 -(1-\theta)\phi_2} \dD x\leq \left(\int_{\T^2} e^{-\phi_1} \dD x\right)^\theta\left(\int_{\T^2} e^{-\phi_2} \dD x\right)^{1-\theta}.
	\]
In turn, since by Jensen's inequality, for any $\phi\in H_0$
$$
\ln\left(\int_{\TT^2}e^{-\phi}\right)\,\geq\,\ln\left(e^{-\int_{\TT^2}\phi}\right)=0\,,
$$
coercivity and boundedness from below are explicitly derived from
\be
J(\phi)\,\geq\, \frac{\delta^2}{2}\int_{\TT^2}|\nabla_x\phi|^2\,-\,K\,\|n_h\|_{H^{-1}(\TT^2)}\,\left(\int_{\TT^2}|\nabla_x\phi|^2\right)^{1/2}
\,+\,\ln\left(\int_{\TT^2}e^{-\phi}\right)
\label{coerc}
\ee
that holds for some constant $K$ and any $\phi\in H_0$. The Euler-Lagrange equation associated with $J$ is actually
$$
 -\delta^2\Delta_{x}\phi_\delta = 
\frac{e^{-\phi_\delta}}{\int_{\TT^2}e^{-\phi_\delta}}-\rho_*\,.
$$
Yet, solutions $\phi_\delta\in H_0$ to the foregoing equation are in one-to-one correspondence with solutions $\phiinf\in H^1(\T^2)$ to Equation~\eqref{PoissonEmden} through 
\be
\phiinf\ =\ \phi_\delta\ +\ \ln\left(\int_{\TT^2}e^{-\phi_\delta}\right)\,,
\qquad\qquad
\phi_\delta\ =\ \phiinf\ -\ \int_{\TT^2}\phiinf\,,
 \label{recast}
\ee
(since Equation~\eqref{PoissonEmden} implicitly contains $\int_{\TT^2}e^{-\phiinf}=1$).

\bpr[Existence, uniqueness and regularity]$ $\\[-0.5em]
\begin{enumerate}
\item For any $\rho_*\in H^{-1}(\T^2)$ such that $\int_{\T^2}\rho_*=1$, for any $\delta>0$, Equation~\eqref{PoissonEmden} possesses a unique weak solution $\phiinf\in H^1(\T^2)$ and this solution is such that $\int_{\TT^2}e^{-\phiinf}=1$.
\item Moreover there exists a positive constant $K$ such that for any such $\rho_*$ and any $\delta>0$, the corresponding solution $\phiinf$ satisfies
$$
\delta^2\|\nabla\phiinf\|_{L^2(\T^2)}^2
\,+\,\left|\int_{\T^2}\phiinf\right|
\,\leq\,K\|\rho_*-1\|_{H^{-1}(\T^2)}^2\,.
$$
\item If additionally, for some  $p\in[1,+\infty]$, $\rho_*\in L^p(\T^2)$ then
      \[
 \|e^{-\phiinf}\|_{L^p(\T^2)}\,\leq\,\|\rho_*\|_{L^p(\T^2)}\,.
\]
\item In particular, for any $p\in(1,+\infty)$, there exists $K_p>0$ such that for any $\rho_*\in L^p(\T^2)$ such that $\int_{\T^2}\rho_*=1$ and any $\delta>0$, the unique solution $\phiinf$ to Equation~\eqref{PoissonEmden} satisfies
$$
  \|\nabla_x^{2}\phiinf\|_{L^p(\T^2)}\,\leq\, \frac{K_p}{\delta^2}\|\rho_*\|_{L^p(\T^2)}
$$
and
\[
\|\phiinf\|_{L^\infty(\T^2)}\,\leq\,\frac{K_p}{\delta^2}\|\rho_*\|_{L^p(\T^2)}\,.
\]
\label{regPB}
\end{enumerate}
      \label{existPB}
\epr
\begin{proof}
Existence and uniqueness follow from the properties of $J$ expounded above through a direct minimization of the strictly convex functional $J$. Then the bound in $H^1$ stems from \eqref{coerc} and $J(0)=0$ by noticing that 
$$\int_{\T^2}\phiinf\,=\,\ln\left(\int_{\TT^2}e^{-\phi_\delta}\right)\geq0\,.
$$ 
 
Concerning $L^p$ estimate of $e^{-\phiinf}$, the formal argument proceeds by multiplying the equation by $-e^{-(p-1)\phiinf}$ and integrating to derive 
\[
  (p-1)\delta^2\int_{\T^2}|\nabla_x\phiinf|^2\,e^{-(p-1)\phiinf} + \int_{\T^2}e^{-p\phiinf}
\,=\,\int_{\T^2}e^{-(p-1)\phiinf}\,\rho_*
\]
that implies
$$
\|e^{-\phiinf}\|_{L^p(\T^2)}^p\leq \|e^{-\phiinf}\|_{L^p(\T^2)}^{p-1}\|\rho_*\|_{L^p(\T^2)}
$$
by H\"older's inequality, from which the bound follows by simple computations. This may be turned into a sound argument by testing instead against $-e^{-(p-1)\max(\{\phiinf,\eta\})}$ and letting $\eta\to-\infty$. 

From here the $W^{2,p}$ bound stems directly from the equation and classical elliptic  regularity properties --- in Calder\'on-Zygmund form --- for which we refer the reader to \cite{Stein_singular-integrals,Stein-Weiss_Fourier} or \cite{Grafakos_I}. The $L^\infty$ bound then follows from the bound on $\int_{\T^2}\phiinf$ and a Sobolev embedding applied to $\phiinf-\int_{\T^2}\phiinf$.
\end{proof}

The foregoing proposition provides an $L^\infty$ bound on $e^{\phiinf}$ that blows up exponentially in $\delta^{-2}$ in the quasi-neutral regime $\delta\rightarrow 0$. Though this will be sufficient for our general argument and thus we do not pursue this line of investigation here, let us mention for precision's sake that the bound on $e^{\phiinf}$ may be dramatically improved if $\rho_*$ is bounded away from zero.

\bpr[Higher regularity]
Let $s\in\N$, $s\geq2$, $q\in(1,\infty]$ and $p\in(1,\infty)$. There exists a positive constant $K=K_{p,q,s}$ and an integer $\alpha_s$ such that for any $\rho_*\in W^{s-2,p}(\T^2)\cap L^\infty(\T^2)$ such that $\int_{\T^2}\rho_*=1$ and any $\delta>0$, the unique solution $\phiinf$ to Equation~\eqref{PoissonEmden} satisfies
\be
\|\nabla_x^{s}\phiinf\|_{L^p(\T^2)}
\,\leq\,
\frac{K}{\delta^2}\left(1+\|e^{-\phiinf}\|_{L^\infty}^{\frac{s-2}{2}}\right)\,\left(1+\|\phiinf\|_{W^{2,q}}^{\alpha_s}\right)\,
\left(1+\tfrac{1}{\delta^{s-2}}\right)\,\|\rho_*\|_{W^{s-2,p}}\,.
  \label{estimPB}
\ee
\label{regPBHs}
 \epr
\begin{proof}
We proceed by induction. The induction estimate
$$
\delta^2\|\nabla^{k+2}\phiinf\|_{L^p(\T^2)}
\,\leq\,K\|\nabla^{k}\rho_*\|_{L^p(\T^2)}
\,+\,K\|e^{-\phiinf}\|_{L^\infty(\T^2)}(1+\|\phiinf\|_{W^{2,q}(\T^2)}^{k-1})
\|\phiinf\|_{W^{k,p}(\T^2)}
$$
is obtained essentially by differentiating the equation and applying suitable Sobolev inequalities in Gagliardo-Nirenberg's form. Namely, the elementary block leading to the foregoing estimates is that if $1\leq\ell\leq k$ and $\sigma\in (\N^*)^\ell$ is such that $|\sigma|=k$ then 
$$
\begin{array}{rcl}\ds
\|\prod_{j=1}^\ell \d^{\sigma_j}\phiinf\|_{L^p(\T^2)}
&\leq&\ds 
\prod_{j=1}^\ell \|\d^{\sigma_j}\phiinf\|_{L^{p_j}(\T^2)}\\
&\leq&K\,\ds
\prod_{j=1}^\ell \|\phiinf\|_{W^{2,q}(\T^2)}^{1-\frac{\sigma_j}{k}}\,\|\nabla^k\phiinf\|_{L^p(\T^2)}^{\frac{\sigma_j}{k}}=\|\phiinf\|_{W^{2,q}(\T^2)}^{\ell-1}\|\nabla^k\phiinf\|_{L^p(\T^2)}\\
&\leq&K\,(1+\|\phiinf\|_{W^{2,q}(\T^2)}^{k-1})\,\|\phiinf\|_{W^{k,p}(\T^2)}
\end{array}
$$
where $p_j=p\,k/\sigma_j$. Actually some of the derivatives needs first to be replaced with finite differences to justify formal manipulations, but we skip those classical details.
\end{proof}

From now on we shall always assume but never repeat that $\phiinf$ is obtained from $\rho_*$ through Equation~\eqref{PoissonEmden} and we shall keep the dependence on norms of $\rho_*$ implicit. Also for concision's sake we shall use without mention estimates of the present section.

\section{The Poisson equation}\label{s:Poisson}
We glean here estimates on $E$ in terms of $h$ when $E=-\nabla\psi$ and
$$
-\delta^2\Delta \psi\ =\ n\,,\qquad n\  =\ \int_{\RM^2} h(\cdot,v)\ \finf(\cdot,v) \dD v 
$$
whenever $h\in\cH_0$. They are naturally obtained from classical estimates on the Poisson equation on one side and estimates on $n$ in terms of $h$ on the other side. We recall that $\|\cdot\|$ denotes the canonical $L^2(\dD\mu)$ norm for the measure $\dD\mu\,=\,\finf\dD x\dD v$, that does depend on $\delta$ and $\rho_*$.

\begin{proposition}$ $\\[-0.5em]
\label{prop:est-density}\label{prop:est-Poisson}
\begin{enumerate}
\item For any $s\in\N$, $\rho_*\in H^{s}(\T^2)$ such that $\int_{\T^2}\rho_*=1$ and $\delta_0>0$ there exists $K_s>0$ such that for all $\delta\geq\delta_0$, for any $h$, $n=\int_{\RM^2} h(\cdot,v)\ \finf(\cdot,v) \emph{\dD} v$ satisfies
$$
\|\nabla_x^sn\|_{L^2(\T^2)}\ \leq\ K_s\,\sum_{k\leq s}\|\nabla_x^k\,h\|
$$
and if moreover $h\in\cH_0$ and $E$ is the corresponding electric field
$$
\|\nabla_x^{s+1}E\|_{L^2(\T^2)}\ \leq\ \frac{K_s}{\delta^2}\,\sum_{k\leq s}\|\nabla_x^k\,h\|\,.
$$ 
\item For any $\rho_*\in H^{1}(\T^2)$ such that $\int_{\T^2}\rho_*=1$, $p\in(1,\infty)$ and $\delta_0>0$ there exists $K_p>0$ such that for all $\delta\geq\delta_0$, for any $h$, $n=\int_{\RM^2} h(\cdot,v)\ \finf(\cdot,v) \emph{\dD} v$ satisfies
$$
\|n\|_{L^p(\T^2)}\ \leq\ K_p\,\|h\|^{\frac2p}\,\|\nabla_x h\|^{1-\frac2p}
$$ 
and if moreover $h\in\cH_0$ and $E$ is the corresponding electric field
$$
\|\nabla_x E\|_{L^p(\T^2)}\ \leq\ \frac{K_p}{\delta^2}\,\|h\|^{\frac2p}\,\|\nabla_x h\|^{1-\frac2p}\,.
$$ 
\item For any $\rho_*\in L^{2}(\T^2)$ such that $\int_{\T^2}\rho_*=1$, $p\in(1,\infty)$ and $\delta_0>0$ there exists $K_p>0$ such that for all $\delta\geq\delta_0$ and any $h\in\cH_0$, the corresponding electric field $E$ satisfies
$$
\|E\|_{L^p(\T^2)}\ \leq\ \frac{K_p}{\delta^2}\,\|h\|\,.
$$ 
\item For any $\rho_*\in L^{2}(\T^2)$ such that $\int_{\T^2}\rho_*=1$, $\eta\in(0,1]$ and $\delta_0>0$ there exists $K_\eta>0$ such that for all $\delta\geq\delta_0$ and any $h\in\cH_0$, the corresponding electric field $E$ satisfies
$$
\|E\|_{L^\infty(\T^2)}\ \leq\ \frac{K_\eta}{\delta^2}\,\|h\|^{1-\eta}\,\|\nabla_x h\|^{\eta}\,.
$$ 
\end{enumerate}
\end{proposition}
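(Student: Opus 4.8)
The plan is to reduce all four bounds to standard elliptic estimates for \eqref{Poisson} on $\T^2$ together with pointwise-in-$x$ control of the macroscopic quantity $n$, exploiting the structural identity $n(x)=\eD^{-\phiinf(x)}(\Pi h)(x)$, where $\Pi h(x):=\int_{\RM^2}h(x,v)\,M(v)\,\dD v$ and $\finf=M\,\eD^{-\phiinf}$. First I would record three elementary facts. \textbf{(a)} By Cauchy--Schwarz in $v$ and $\int_{\RM^2}M=1$, $|\Pi h(x)|^2\le\int_{\RM^2}|h(x,v)|^2M(v)\,\dD v$ pointwise, hence $\|\eD^{-\phiinf/2}\,\Pi h\|_{L^2(\T^2)}\le\|h\|$, and likewise with $h$ replaced by any $\nabla_x^k h$ since $\Pi$ commutes with $\nabla_x$. \textbf{(b)} If $h\in\cH_0$ then $\int_{\T^2}(\Pi h)\,\eD^{-\phiinf}=\int h\,\dD\mu=0$, so $\Pi h$ is mean-free for the probability measure $\eD^{-\phiinf}\dD x$; since Proposition~\ref{existPB} gives $\|\phiinf\|_{L^\infty}\le K(\delta_0,\rho_*)$ when $\delta\ge\delta_0$, that measure is uniformly comparable to Lebesgue measure, so a weighted Poincar\'e inequality on $\T^2$ yields $\|\Pi h\|_{L^2(\eD^{-\phiinf}\dD x)}\le K\,\|\nabla_x(\Pi h)\|_{L^2(\eD^{-\phiinf}\dD x)}\le K\,\|\nabla_x h\|$, the last step again by (a). \textbf{(c)} By Propositions~\ref{existPB} and~\ref{regPBHs}, for $\delta\ge\delta_0$ all of $\|\eD^{\pm\phiinf}\|_{L^\infty}$ and $\|\nabla_x^i\phiinf\|_{L^\infty}$ (for $i$ up to the order permitted by the assumed regularity of $\rho_*$) are bounded in terms of $\delta_0$ and $\rho_*$ only; this is the sole place where $\delta\ge\delta_0$ is used, those estimates degenerating only in the quasineutral limit.

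For \textbf{(1)} I would differentiate $n=\eD^{-\phiinf}\Pi h$ by Leibniz, expand $\nabla_x^j(\eD^{-\phiinf})$ via Fa\`a di Bruno as $\eD^{-\phiinf}$ times a universal polynomial in $\nabla_x\phiinf,\dots,\nabla_x^j\phiinf$ (in $L^\infty$, uniformly in $\delta\ge\delta_0$, by (c) when $\rho_*\in H^s(\T^2)$), and close with (a) applied to each $\nabla_x^{s-j}h$; this gives $\|\nabla_x^s n\|_{L^2}\le K_s\sum_{k\le s}\|\nabla_x^k h\|$. The field bound then follows from $\nabla_x^{s+1}E=-\delta^{-2}(\nabla_x^2\Delta_x^{-1})(\nabla_x^s n)$ and the boundedness on mean-free $L^2(\T^2)$ of the degree-zero Fourier multiplier $\nabla_x^2\Delta_x^{-1}$.

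For \textbf{(2)} I would use (a) with $k=0$ to get $\|n\|_{L^2}\le K\|h\|$, and (a)--(b) together with $\nabla_x n=-(\nabla_x\phiinf)\,n+\eD^{-\phiinf}\,\Pi(\nabla_x h)$ to obtain both $\|n\|_{L^2}\le K\|\nabla_x h\|$ and $\|\nabla_x n\|_{L^2}\le K\|\nabla_x h\|$, i.e.\ $\|n\|_{H^1(\T^2)}\le K\|\nabla_x h\|$; since $n$ is mean-free, the two-dimensional Gagliardo--Nirenberg inequality $\|n\|_{L^p}\le K_p\|n\|_{L^2}^{2/p}\|\nabla_x n\|_{L^2}^{1-2/p}$ ($2\le p<\infty$) then yields the claim, the range $p\in(1,2]$ being trivial from $p=2$. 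For $E$ one uses $\nabla_x E=-\delta^{-2}(\nabla_x^2\Delta_x^{-1})n$ and the Calder\'on--Zygmund bound $\|\nabla_x^2\Delta_x^{-1}n\|_{L^p}\le K_p\|n\|_{L^p}$, valid precisely for $p\in(1,\infty)$ --- which is exactly where that restriction originates. For \textbf{(3)} I would write $E=-\delta^{-2}\nabla_x(-\Delta_x)^{-1}n$, use the elliptic bound $\|(-\Delta_x)^{-1}n\|_{H^2(\T^2)}\le K\|n\|_{L^2}$ for mean-free $n$ and the embedding $H^1(\T^2)\hookrightarrow L^p(\T^2)$ ($p<\infty$), getting $\|E\|_{L^p}\le K_p\,\delta^{-2}\|n\|_{L^2}\le K_p\,\delta^{-2}\|h\|$. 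For \textbf{(4)}, given $\eta\in(0,1)$ I would set $p=2/(1-\eta)\in(2,\infty)$ and combine $W^{1,p}(\T^2)\hookrightarrow L^\infty(\T^2)$ with the Calder\'on--Zygmund bound $\|E\|_{W^{1,p}}\le K_p\,\delta^{-2}\|n\|_{L^p}$ and part~(2); the endpoint $\eta=1$ is treated separately by $\|E\|_{L^\infty}\le K\|E\|_{H^2(\T^2)}\le K\,\delta^{-2}\|n\|_{H^1(\T^2)}\le K\,\delta^{-2}\|\nabla_x h\|$, again using the $H^1$-bound on $n$ from part~(2).

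I expect the one nonroutine ingredient to be fact~(b): without the Poincar\'e inequality for the macroscopic density $\Pi h$ one only reaches $\|n\|_{L^p}\lesssim\|h\|^{2/p}(\|h\|+\|\nabla_x h\|)^{1-2/p}$, rather than the homogeneous interpolation form demanded by the statement. Note that Proposition~\ref{prop:est-poincare} is not the right tool here, since it carries a $\|\nabla_v h\|$ term that $n$ does not see; what is used instead is the plain torus Poincar\'e applied to the $x$-dependent density $\Pi h$, which is mean-free for the equilibrium measure precisely because $h\in\cH_0$. Everything else is bookkeeping, the recurring concern being to keep every constant independent of $\delta$ throughout the range $\delta\ge\delta_0$, which is guaranteed by the $\delta$-uniform estimates of Section~\ref{p:PoissonBoltzmann}.
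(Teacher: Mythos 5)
Your proposal is sound in substance, and parts (1) and (3) are essentially the paper's argument (Leibniz/Cauchy--Schwarz pointwise in $x$ for $\nabla_x^sn$, then classical elliptic regularity for $E$). For part (2) you take a genuinely different route: the paper bounds $\|n\|_{L^p}$ by Minkowski's inequality in $v$, the Gagliardo--Nirenberg inequality applied in $x$ to each slice $h(\cdot,v)$, and H\"older in $v$, using only $\|e^{\pm\phiinf}\|_{L^\infty}$; you instead factor $n=e^{-\phiinf}\Pi h$, prove $\|n\|_{H^1(\T^2)}\lesssim\|\nabla_x h\|$ via a weighted Poincar\'e inequality (valid because $h\in\cH_0$ makes $\Pi h$ mean-free for $e^{-\phiinf}\dD x$), and apply Gagliardo--Nirenberg to the mean-free density $n$ itself. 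Your closing observation is apt: the homogeneous interpolation does need some mean-zero information, which the paper's slice-wise application leaves implicit (and which indeed fails for constant $h$ when $p>2$), so your variant is a legitimate repair in the only situation where the bound is ever used, namely $h\in\cH_0$. For part (4) the paper interpolates $\|E\|_{L^\infty(\T^2)}\leq K\|n\|_{L^p(\T^2)}^{\theta}\|n\|_{L^q(\T^2)}^{1-\theta}$ with $p<2<q$, while your $W^{1,p}(\T^2)\hookrightarrow L^\infty(\T^2)$ plus Calder\'on--Zygmund route is an equally standard alternative.

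Two caveats. First, your proof of (2) consumes $\|\nabla_x\phiinf\|_{L^\infty}$ through $\nabla_x n=-(\nabla_x\phiinf)\,n+e^{-\phiinf}\Pi(\nabla_x h)$; this is available under the hypothesis $\rho_*\in H^1(\T^2)$ of (2), but not under the hypothesis $\rho_*\in L^2(\T^2)$ of (4), where one only gets $\nabla_x\phiinf\in H^1(\T^2)\subset L^q(\T^2)$, $q<\infty$. Since your proof of (4), including the $\eta=1$ endpoint via $\|n\|_{H^1}$, is built on your version of (2), it does not run under the stated $\rho_*\in L^2$; the paper's slice-wise bound for $\|n\|_{L^q}$ needs only $e^{\pm\phiinf}\in L^\infty$ (alternatively, split $h(\cdot,v)$ into its $x$-mean plus a slice-wise mean-free part and use $h\in\cH_0$ to control the mean contribution). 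In the paper's applications $\rho_*\in W^{1,p}$, $p>2$, so this is harmless, but as a proof of the proposition as stated it is a small gap. Second, your aside that the range $p\in(1,2]$ is ``trivial from $p=2$'' is not right for $p<2$: there the right-hand side carries a negative power of $\|\nabla_x h\|$ and does not follow from the $p=2$ case (and is in fact not true in general); only $p\geq2$ is meaningful, and that is also all the paper's own argument delivers and all that is used later.
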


\begin{proof}
The Sobolev estimate on the macroscopic density stems from an integration of the point-wise
$$
|\nabla^s n(x)|^2\,\leq\,K\,\sum_{k\leq s}
|\nabla_x^{s-k}(e^{-\phiinf})(x)|^2\,e^{\phiinf(x)}\,
\int_{\RM^2} |\nabla_x^k h(x,v)|^2\ \finf(x,v) \dD v
$$
that follows from direct differentiation and Jensen's inequality (for the square function). The Lebesgue estimate on the macroscopic density follows from 
$$
\begin{array}{rcl}\ds
\|n\|_{L^p(\T^2)}&\leq&\ds
\|e^{-\phiinf}\|_{L^\infty(\T^2)}
\int_{\R^2}\|h(\cdot,v)\|_{L^p(\T^2)}\,M(v)\dD v\\
&\leq&\ds K_p\|e^{-\phiinf}\|_{L^\infty(\T^2)}
\int_{\R^2}\|h(\cdot,v)\|_{L^2(\T^2)}^{\frac2p}
\|\nabla_xh(\cdot,v)\|_{L^2(\T^2)}^{1-\frac2p}\,M(v)\dD v\\
&\leq&\ds K_p\|e^{-\phiinf}\|_{L^\infty(\T^2)}\,\|e^{\phiinf}\|_{L^\infty(\T^2)}\,
\|h\|^{\frac2p}\,\|\nabla_x h\|^{1-\frac2p}
\end{array}
$$
that is derived by triangle inequality, some Sobolev embeddings and the H\"older inequalities. Remaining estimates are then deduced from classical elliptic regularity. Note in particular that for any $1\leq p<2<q\leq\infty$ there exists $K_{p,q}$ such that
$$
\|E\|_{L^\infty(\T^2)}\leq 
K_{p,q} \|n\|_{L^p(\T^2)}^{\theta_{p,q}}\|n\|_{L^q(\T^2)}^{1-\theta_{p,q}}
$$
where $\theta_{p,q}\in(0,1)$ is defined by $1/2=\theta_{p,q}/p+(1-\theta_{p,q})/q$.
\end{proof}


\section{Frozen equations}\label{s:frozen}

As preliminaries to nonlinear final arguments, here we consider nonlinear terms as source terms and derive corresponding estimates. Namely we set
\be
\d_th\,+\,
L_\tau h - E\cdot v\,=\,
\tE\cdot A^*g\,,
\label{e:frozen}
\ee
where
$$
E\,=\,
\delta^{-2}\nabla_x\Delta_x^{-1}n\,,\qquad
n\,=\,\int_{\RM^2} h\ \finf \dD v\,,
$$
and $\tE$ and $g$ are given sources, $\tE$ being derived from some given $\tg\in\cH_0$ through
$$
\tE\,=\,\delta^{-2}\nabla_x\Delta_x^{-1}\tn\,,\qquad
\tn\,=\,\int_{\RM^2} \tg\ \finf \dD v\,.
$$

We first collect algebraic identities describing each elementary piece of the final energy estimate.

\bl\label{lem:defQR}
Any smooth localized $h$ solving \eqref{e:frozen} satisfies
$$
\begin{array}{rcl}
\ds
\frac 12\frac{\emph{\dD}}{\emph{\dD} t}\left(\|h\|^2 + \delta^2\|E\|^2_{L^2}\right) 
+\frac{1}{\tau}\|Ah\|^2 
&=&\ds 
\cR_0(h,g,\tg)\,,\label{est0}\\[0.5em]
\ds
\frac 12\frac{\emph{\dD}}{\emph{\dD} t}\left(\|Ah\|^2 + \delta^2\|E\|^2_{L^2}\right)
+\frac{1}{\tau}(\|Ah\|^2 + \|A^2h\|^2)
&=&\ds
Q_A(h)+\cR_A(h,g,\tg)\,,\label{estv}\\[0.5em]
\ds
\frac 12\frac{\emph{\dD}}{\emph{\dD} t}\|Ch\|^2
+\frac{1}{\tau}\|A Ch\|^2
&=&\ds
Q_C(h)+\cR_C(h,g,\tg)\,,\label{estx}\\[0.5em]
\ds
\frac{\emph{\dD}}{\emph{\dD} t}\lla Ah, Ch\rra
+\|Ch\|^2
&=&\ds
Q_{AC}(h)+\cR_{AC}(h,g,\tg)\,,\label{estvx}
\end{array}
$$
with quadratic terms given by 
\begin{eqnarray}
\ds
Q_A(h)&=&-\lla Ch, Ah\rra\label{QA}\\
Q_C(h)&=&\lla \textrm{Hess}(\phiinf)Ah,Ch\rra + \lla \nabla_x(E\cdot v), Ch\rra\label{QC}\\
Q_{AC}(h)&=&-\frac{1}{\tau}\left(\lla Ah, Ch\rra + 2\lla A^2h, A Ch\rra\right)\ +\ \tQ_{AC}(h)\label{QAC}
\end{eqnarray}
where 
$$
\tQ_{AC}(h)\,=\,\lla  \textrm{Hess}(\phiinf) Ah,Ah\rra+\lla E, Ch\rra + \lla \nabla_x(E\cdot v), Ah\rra
$$
and trilinear terms 
\begin{eqnarray}
\ds
\cR_0(h,g,\tg)&=&\ds\lla Ah,\tE g\rra\label{R0}\\
\cR_A(h,g,\tg)&=&\ds\lla Ah,\tE g\rra+\lla A^2h,\tE Ag\rra\label{RA}\\
\cR_C(h,g,\tg)&=&\ds\lla ACh,\tE Cg\rra+\lla CAh,g\nabla_x\tE\rra\label{RC}\\
\quad\cR_{AC}(h,g,\tg)&=&\ds
\lla A^2h,\tE Cg\rra+\lla A^2h,g\nabla_x\tE\rra
+\lla Ch,\tE g\rra+\lla ACh,\tE Ag\rra\,.\label{RAC}
\end{eqnarray}
\el

For latter use we also observe that if more frozen nonlinear terms of the same form were added to \eqref{e:frozen}, this would only result in adding more trilinear terms of the same form in the foregoing identities. Namely, if $\tE\cdot A^*g$ is replaced with $\tE_1\cdot A^*g_1+\tE_2\cdot A^*g_2$, then accordingly trilinear terms $\cR_\#(h,g,\tg)$ are turned into $\cR_\#(h,g_1,\tg_1)+\cR_\#(h,g_2,\tg_2)$.

\begin{proof}
We evaluate the time derivative of $\|h\|^2$, $\|Ah\|^2$, $\|Ch\|^2$ and $\lla Ah,\,Ch\rra$. Using \eqref{e:frozen}, we need to compute three kinds of terms, involving respectively $L_\tau$, $E\cdot v$ and the nonlinear product $\tE\cdot A^*g$. Trilinear remainders $\cR_0$, $\cR_A$, $\cR_C$ and $\cR_{AC}$  are exclusively obtained from the latter while dissipation terms and quadratic remainders come from other terms.

To obtain the first energy equality we use the skew-symmetry of $B$ to derive 
\[
-\lla L_\tau h,\ h\rra\,=\,-\frac{1}{\tau}\lla A^*A h,\ h\rra\,=\,-\frac1\tau \|Ah\|^2,
\]
To proceed we use the continuity equation
\be\label{e:n}
\d_t n \,+\,\textrm{div}_x(j)\,=\,0\,,\qquad j\,:=\,\int_{\R^2} v\,h\,\finf \dD v
\ee
obtained by multiplying the first equation of \eqref{e:frozen} by $\finf$ and integrating with respect to the velocity variable. By the Poisson equation, written in terms of a potential $\psi$ such that $E=-\nabla \psi$, and \eqref{e:n} we obtain
\[
 \lla E\cdot v ,\ h\rra\,=\,\int_{\TT^2} j\cdot E\,\dD x\,=\,-\int_{\TT^2} \psi\,\partial_t n\,\dD x\,=\,-\delta^2\int_{\TT^2} \nabla_x\psi\cdot\partial_t \nabla_x\psi\, \dD x
\,=\,-\frac12\delta^2\frac{\dD}{\dD t}\|E\|_{L^2}^2\,.
\]

The second equation follows from similar computations using that $C=[A,B]$ and $\I=[A,A^*]$, which implies
\[
\begin{aligned}
 &\|A^*\cdot Ah\|^2&=&&&\lla A^*_iA_ih, A^*_jA_jh \rra\\
 &&=&&&\lla A_jA^*_iA_ih, A_jh \rra\\
 &&=&&&\lla \delta_{ij}A_ih, A_jh \rra + \lla A_jA_ih, A_iA_jh \rra\\
 &&=&&&\|Ah\|^2 + \|A^2h\|^2\,.
 \end{aligned}
\] 
This leads to
\[
\begin{aligned}
 &-\lla AL_\tau h,\ Ah\rra&=&&& -\frac{1}{\tau}\|A^*Ah\|^2-\lla ABh,\ Ah\rra\\
 &&=&&& -\frac{1}{\tau}(\|Ah\|^2 + \|A^2h\|^2)-\lla Ch,\ Ah\rra,
\end{aligned}
\]
and we also observe that
\[
\lla A_i(E_j\, v_j),\ A_ih\rra\,=\,\lla E_i ,\ A_ih\rra\,=\,\lla A_i^*(E_i),\ h\rra = \lla v\cdot E ,\ h\rra,
\]
which we recognize as a term already computed. The trilinear term $\cR_A$ is
$$
\lla A_ih,\ A_i(\tE_j\, A^*_jg)\rra\,=\lla A_ih,\ A_i\,A^*_j\,\tE_jg)\rra\,=\,\lla A_ih,\ \tE_i g\rra+\lla A_i\,A_jh,\ \tE_j\, A_ig\rra\,.
$$

Concerning the third equation we first compute 
\[
 -\lla C_i\,L_\tau h,\ C_ih\rra
= -\frac{1}{\tau}\lla A_j\,C_ih,\ A_j\,C_ih\rra + \lla [B,C_i]h,\ C_ih\rra,\\
\]
and use that $[B,C] = \textrm{Hess}(\phiinf)\,A$. The remainder term $\cR_C$ is
\[
 \lla  C_ih,\ C_i\,(\tE_j\,A_j^*g)\rra\,=\,\lla C_ih,\ (\partial_{x_i}\tE_j)\,A^*_jg\rra + \lla C_ih,\  \tE_j\,C_i\,A^*_j g\rra.
\]

Finally, for the fourth equality, we only explain how a few typical terms are derived, other following by the same kind of arguments. To this purpose note that
$$
-\lla ABh,\  Ch\rra-\lla Ah,\  CBh\rra\,=\,-\|Ch\|^2+\lla Ah,\  [B, C]h \rra
$$
and
$$
-\lla A\,A^*\cdot Ah,\  Ch\rra-\lla Ah,\  CA^*\cdot Ah\rra
\,=\,-\lla Ah,\  Ch\rra - 2\lla A^2h,\  ACh\rra\,.
$$
\end{proof}

In the following lemmas, we indicate how to estimate each right-hand side term of the foregoing lemma. 
The first lemma is a trivial corollary of the Cauchy-Schwarz inequality, 
estimates on $E$ and $\phiinf$ 
and the fact that $v\mapsto v$ lies in $L^2(M(v)\, \dD v)$. 
\bl
For any $\rho_*\in W^{1,p}(\T^2)$, $p>2$ such that $\int_{\T^2}\rho_*=1$ and $\delta_0>0$ there exists $K>0$ such that for all $\delta\geq\delta_0$, for any $h$
$$
\begin{array}{rcl}
\ds
|Q_A(h)|&\leq&\ds\|Ch\|\,\|Ah\|\\[0.5em]
|Q_C(h)|&\leq&\ds\frac{K}{\delta^2}\left(\|Ah\|\,\|Ch\|+\|h\|\,\|Ch\|\right)\\[0.75em]
|Q_{AC}(h)-\tQ_{AC}(h)|&\leq&\ds\frac{1}{\tau}\left(\|Ah\|\|Ch\|+ 2\|A^2h\|\|A Ch\|\right)\,\\[0.75em]
|\tQ_{AC}(h)|&\leq&\ds\frac{K}{\delta^2}\left(\|Ah\|^2+\|h\|\,\|Ch\|+\|h\|\|Ah\|\right)\,,
\end{array}
$$
where terms on the left-hand sides are defined as in \eqref{QA}-\eqref{QC}.
\label{lem:estQ}
\el

Now we estimate trilinear terms by norms involving $h,\,g,\,\tg$ and their derivatives.
\bl
For any $\rho_*\in L^{2}(\T^2)$ such that $\int_{\T^2}\rho_*=1$, $\eta\in(0,1)^4$ and $\delta_0>0$ there exists $K=K_{\eta}>0$ such that for all $\delta\geq\delta_0$ and any $(h,g,\tg)\in(\cH_0)^3$,
\[
\begin{array}{rcl}
\ds
|\cR_0(h,g,\tg)|&\leq&\ds \frac{K}{\delta^2}\ \|Ah\|\ \min(\|g\|^{1-\eta_1}\ \|Cg\|^{\eta_1}\ \|\tg\|,\ \|g\|\ \|\tg\|^{1-\eta_2}\ \|C\tg\|^{\eta_2})\,,\\
\,\\[0.75em]\ds
|\cR_A(h,g,\tg)|&\leq&\ds\frac{K}{\delta^2}\ \|Ah\|\ \|g\|^{1-\eta_1}\ \|Cg\|^{\eta_1}\ \|\tg\|\\[0.75em]\ds
&+&\ds
\frac{K}{\delta^2}\ \|A^2h\|\ \|\tg\|^{1-\eta_2}\ \|C\tg\|^{\eta_2}\ \|Ag\|\,,\\
\,\\[0.75em]\ds
|\cR_C(h,g,\tg)|&\leq&\ds\frac{K}{\delta^2}\ \|ACh\|\ \|Cg\|\ \|\tg\|^{1-\eta_1}\ \|C\tg\|^{\eta_1}\\[0.75em]
&+&\ds\frac{K}{\delta^2}\ \|ACh\|\ \|\tg\|^{1-\eta_2}\ \|C\tg\|^{\eta_2}\ \|g\|^{\eta_2}\ \|Cg\|^{1-\eta_2}\,,\\
\,\\[0.75em]\ds
\end{array}
\]
\[
\begin{array}{rcl}
|\cR_{AC}(h,g,\tg)|&\leq&\ds
\frac{K}{\delta^2}\ \|A^2h\|\ \|\tg\|^{1-\eta_1}\|C\tg\|^{\eta_1}\ \|Cg\|\\[0.75em]
&+&\ds
\frac{K}{\delta^2}\ \|A^2h\|\ \|\tg\|^{1-\eta_2}\ \|C\tg\|^{\eta_2}\ \|g\|^{\eta_2}\ \|Cg\|^{1-\eta_2}\\[0.75em]
&+&\ds
\frac{K}{\delta^2}\ \|Ch\|\ \|g\|^{1-\eta_3}\ \|Cg\|^{\eta_3}\ \|\tg\|\\[0.75em]
&+&\ds
\frac{K}{\delta^2}\ \|ACh\|\ \|\tg\|^{1-\eta_4}\|C\tg\|^{\eta_4}\ \|Ag\|\,.\\[0.5em]
\end{array}
\]
\label{lem:estR}
where terms on the left-hand sides are defined as in \eqref{R0}-\eqref{RAC}.
\el

\begin{proof}
Half of the estimate on $\cR_0$ stems directly from bounds on $\|\tE\|_{L^\infty(\TT^2)}$. The other half follows, setting $p=2/\eta\in(2,\infty)$ and defining $q\in(2,\infty)$ by $1/2=1/p+1/q$, from
\[
\begin{aligned}
 &\left|\lla Ah,\tE g\rra\right|&\leq&&&K\,\|Ah\|\,\|\tE\|_{L^p(\TT^2)}\,\|g\|_{L^2(M\dD v,L^{q}(\dD x))}\\
 &&\leq&&&K'\,\|Ah\|\,\|\tE\|_{L^p(\TT^2)}\,\|g\|^{1-\eta}\,\|Cg\|^{\eta}
 \end{aligned}
\]
where we have used bounds on $\phiinf$ and H\"older and Sobolev inequalities and noticed that $2/q=1-\eta$. Then, one concludes thanks to bounds on $\|\tE\|_{L^p(\TT^2)}$. 

To estimate $\cR_A$, we simply notice that 
\[
 |\cR_A(h,g,\tg)|\,\leq\,|\cR_0(h,g,\tg)|\,+\,\|A^2h\|\,\|\tE\|_{L^\infty(\TT^2)}\,\|Ag\|.
\]

As for the third estimate, the first term of $\cR_C$ is dealt with similarly using once again the $L^\infty$ bound on $\tE$. However the second term requires a more careful distribution of spatial derivatives, essentially as in the proof of the bound on $\|n\|_{L^p(\T^2)}$ of Proposition~\ref{prop:est-density}. Namely, set $p=2/\eta_2\in(2,\infty)$ and define $q\in(2,\infty)$ by $1/p+1/q=2$, then 
\[
\left|\lla CAh,\, g\,\nabla_x\tE\rra\right|
\,\leq\,K\,\|ACh\|\,\,\|\nabla_x\tE\|_{L^p(\TT^2)}\,\|g\|_{L^2(M\dD v,L^{q}(\T^2))}
\]
with
\[
\begin{aligned}
&\|g\|_{L^2(M\dD v,L^{q}(\T^2))}
&\leq&&&K'\,\left(\int_{\R^2}\|g(\cdot,v)\|_{L^2(\T^2)}^{2\left(1-\frac2p\right)}\,\|Cg(\cdot,v)\|_{L^2(\T^2)}^{\frac4p} M(v)\,\dD v\right)^{\frac12}\\
 &&\leq&&&K''\,\|g\|^{1-\tfrac2p}\,\|Cg\|^{\tfrac2p}\,
\end{aligned}
\]
by Sobolev embeddings, H\"older inequalities and $L^\infty$ bounds on $\phiinf$. The estimate is achieved by relying on bounds on $\|\nabla_x\tE\|_{L^p(\TT^2)}$.

We skip the estimate of $\cR_{AC}$ as it is completely similar.
\end{proof}

\section{Linear warm-up}\label{s:linear}
For expository purpose and to support our choice of exponents in the functional \eqref{func} we first develop our strategy on the following equation
\be\label{VFP}
\d_th + L_\tau h\ =\ 0\,,
\ee
supplemented with initial data $h_0$. Recall that $-L_\tau$  generates a semi-group of contractions on $\cH$ \cite{Helffer-Nier_book, Herau-Nier}. 
Moreover since they form a core for $L_\tau$ it is sufficient to deal with Schwartz initial data, for which all following computations are readily justified.

As already expounded in 
Section~\ref{s:strategy} 
our goal is to prove that in each regime under suitable conditions on parameters $\beta\in\R^3$ and  $\gamma\in(0,+\infty)^3$, the following functional 
\[
\begin{aligned}
&\nrm h \nrm_{t}&=&&&\|h\|^2 
 + \gamma_1\tau^{\beta_1}\min\left(1,\tfrac{t}{\tau}\right)\|Ah\|^2 \\
 &&&+&& \gamma_2\tau^{\beta_2}\min\left(1,\tfrac{t}{\tau}\right)^{3}\|Ch\|^2 
 + 2\gamma_3\tau^{\beta_3}\min\left(1,\tfrac{t}{\tau}\right)^{2}\lla Ah, Ch\rra,
 \end{aligned}
\]
is decaying in time with dissipation rate at least
\[
\begin{aligned}
&D_{t}(h)&=&&&
\tau^{-1}\|Ah\|^2 
+ \gamma_1\tau^{\beta_1-1}\min\left(1,\tfrac{t}{\tau}\right)\|A^*\cdot Ah\|^2\\ 
&&&+&& \gamma_2\tau^{\beta_2-1}\min\left(1,\tfrac{t}{\tau}\right)^{3}\|ACh\|^2 
+ \gamma_3\tau^{\beta_3}\min\left(1,\tfrac{t}{\tau}\right)^{2}\|Ch\|^2.
\end{aligned}
\]

Note that in order to prove exponential decay it is then sufficient to invoke the following entropy/dissipation inequality stemming directly from the Poincaré inequality of Proposition~\ref{prop:est-poincare} and bounds on $\phiinf$. 
\bl
For any $(\beta,\gamma)\in(0,+\infty)^6$ and any $\delta_0>0$ there is a positive constant $K$ such that for any $\tau>0$, any $\delta\in(\delta_0,\infty)$, any $h\in\cH_0$ and any $t\geq \tau$
\[
K\,\min\left(\tau^{-1}, \tau^{\beta_3}, \tau^{\beta_3-\beta_2}\right)\,\left[\|h\|^2 
+\tau^{\beta_1}\,\|Ah\|^2
+\tau^{\beta_2}\,\|Ch\|^2\right]\,\leq\,D_{\gamma,\,\beta,\,\tau,\,t}(h)\,.
\]
\label{lem:poincare2}
\el

\bpr[Diffusive regime] Under the following conditions on $\beta\in\R^3$
 \be
\max\left(1, \frac{\beta_1+\beta_2}{2}\right)\leq \beta_3\leq \min\left(2\beta_1+1, \beta_2-1\right)\,,
\label{condbetadiff}
\ee
for any $\tau_0>0$ there exist $\gamma\in(0,+\infty)^3$, $c_0>0$, $C_0>0$ and $\tilde{\theta}>0$ such that for any $\rho_*\in H^{-1}(\T^2)$ such that $\int_{\T^2}\rho_*=1$, any $\delta>0$ and any $\tau\in(0,\tau_0)$ 
\begin{enumerate}
\item for any $h$, for any $t\geq0$,
$$
\nrm h\nrm_{t}\geq
c_0\left(\|h\|^2 
+\tau^{\beta_1}\min\left(1,\tfrac{t}{\tau}\right)\|Ah\|^2
+\tau^{\beta_2}\min\left(1,\tfrac{t}{\tau}\right)^{3}\|Ch\|^2
\right)
$$
and
$$
\nrm h\nrm_{t}\leq 
C_0\left(\|h\|^2 
+\tau^{\beta_1}\min\left(1,\tfrac{t}{\tau}\right)\|Ah\|^2
+\tau^{\beta_2}\min\left(1,\tfrac{t}{\tau}\right)^{3}\|Ch\|^2
\right)\,;
$$
\item for any $h_0\in\cH_0$ the solution to the linear Vlasov-Fokker-Planck equation \eqref{VFP} starting from $h_0$ satisfies for all $t\geq0$
\[
 \nrm h(t,\cdot,\cdot) \nrm_{t}^2 + \tilde{\theta}\int_0^t\,D_{s}(h(s,\cdot,\cdot))\,\emph{\dD} s\,\leq\,\|h_0\|^2\,.
\]
\end{enumerate}
\label{p:unifestdiff}
\epr
\begin{proof}
One may adapt Lemma~\ref{lem:defQR} to \eqref{VFP} with resulting modifications being that there is no electric field in time derivatives and in remainder terms no trilinear term, no $\tQ_{AC}$ and no $Q_C$. In particular, only $Q_{A}$ and half of $Q_{AC}$ have non-zero contribution to remainders and those may be bounded without resorting to $\phiinf$ bounds. This leads to
 \[
\frac12\nrm h \nrm_{t}^2 + \int_0^t\,D_{s}(h)\,\dD s\,\leq\,\frac 12\|h_0\|^2 + \int_0^t\,\cR_{s}(h)\,\dD s\,,
 \]
with
 \[
\begin{aligned}
  & \cR_{t}(h)&=&&&\gamma_1\tau^{\beta_1}\min\left(1,\tfrac{t}{\tau}\right)\|Ch\|\,\|Ah\|\\
&&+&&&\gamma_3\tau^{\beta_3-1}\min\left(1,\tfrac{t}{\tau}\right)^{2}\left(\|Ah\|\|Ch\|+ 2\|A^2h\|\|A Ch\|\right)\\
&&+&&&\frac12\chi_{t<\tau}\left(\gamma_1\tau^{\beta_1-1}\|Ah\|^2 
+ 3\gamma_2\tau^{\beta_2-1}\left(\tfrac{t}{\tau}\right)^{2}\|Ch\|^2 
+ 4\gamma_3\tau^{\beta_3-1}\left(\tfrac{t}{\tau}\right)\|Ah\|\,\|Ch\|\right)
   \end{aligned}
 \]
where we have used notation $\chi_{t<\tau}$ to denote the value at time $t$ of the characteristic function of $[0,\tau)$, namely $0$ if $t\geq \tau$, $1$ otherwise. 

Now we want to ensure that $\cR_{t}$ is controlled by an arbitrarily small fraction of $D_{t}$, uniformly in time and in $\tau$. 
This is obtained by repeated use of Young's inequality on each remainder terms, carefully dispatching powers of $\tau$ and time weights. To simplify the choice of constants, we seek $\gamma_j$ under the form $\eps^{c_j}$ for some positive $c_j$. As an example let us first show how to control the term coming from $Q_{AC}$. We have
\begin{align*}
\eps^{c_3}\tau^{\beta_3-1}&\min\left(1,\tfrac{t}{\tau}\right)^{2}\,\|Ah\|\|Ch\|\\
 =&\ \eps^{c_3/2}\,\tau^{\beta_3-1 -(\beta_3-1)/2}\min\left(1,\tfrac{t}{\tau}\right)\,\left(\tau^{-1/2}\|Ah\|\right)\,\left(\eps^{c_3/2}\,\tau^{\beta_3/2}\,\min\left(1,\tfrac{t}{\tau}\right)\,\|Ch\|\right)\, \\
 \leq&\ \eps^{c_3/2}\,\tau^{(\beta_3-1)/2}\min\left(1,\tfrac{t}{\tau}\right)\,D_{t}(h)\,,
\end{align*}
and $c_3>0$ and $\beta_3\geq 1$ is required for the right-hand side to be bounded by $\theta_\eps\,D_{t}(h)$, uniformly in time, for small $\tau$, for some given $\theta_\eps\in(0,1)$ going to zero when $\eps$ goes to zero. To repeat the procedure on all terms, we observe that more generally a bound
$$
\eps^{c}\tau^\beta\,\min\left(1,\tfrac{t}{\tau}\right)^\alpha
K\,L\,\stackrel{\eps\to0}{=}\,
o(\eps^{c'}\tau^{\beta'}\,\min\left(1,\tfrac{t}{\tau}\right)^{\alpha'}K^2
+\eps^{c''}\tau^{\beta''}\,\min\left(1,\tfrac{t}{\tau}\right)^{\alpha''}L^2)
$$
uniform with respect to $K$, $L$, $t$ and $\tau\leq\tau_0$ requires $\alpha\geq\min(\{\alpha',\alpha''\})$. If $\alpha'\neq\alpha''$ and $\alpha\in[\alpha',\alpha'']$ then there exists a unique $\theta \in[0,1]$ such that $\alpha=\theta\,\alpha'+(1-\theta)\,\alpha''$ and the estimate also requires $\beta\geq\theta\beta'+(1-\theta)\beta''$. However once the above conditions are fulfilled the estimate holds provided that $c>\theta c'+(1-\theta)c''$. The final upshot is that our claimed estimate followd from 
$$
\begin{array}{ccc}\ds
\beta_1-1\geq-1\,,&\ds
\beta_2-1\geq\beta_3\,,&\ds
\beta_3-1\geq\tfrac12(\beta_3-1)\,,\\
\beta_1\geq\tfrac12(\beta_3-1)\,,&\ds
\beta_3-1\geq \tfrac23(\beta_3-1)-\tfrac13
\,,&\ds
\beta_3-1\geq\tfrac12(\beta_1+\beta_2-2)\,,
\end{array}
$$
--- which reduces to \eqref{condbetadiff} --- and 
$$
\tfrac12(c_1+c_2)<c_3<\min(c_2,2c_1)\,.
$$
We conclude by noting that the choice $(c_1,c_2,c_3)=(1,2,7/4)$ fulfills the latter constraint.

So far we have omitted the very first constraint on $\nrm\cdot\nrm_{t}$. Yet a sufficient condition is 
$$
\beta_3\geq\tfrac12(\beta_1+\beta_2)\,,\quad \gamma_3<\sqrt{\gamma_1\,\gamma_2}\,,
$$
which is redundant with above requirements.
\end{proof}

The proof of Proposition~\ref{p:unifestdiff} may be readily adapted to cope with the regime of evanescent collisions, yielding the following result 
where constraints on $\beta_1$, $\beta_2$ and $\beta_3$ are reversed to ensure uniform bounds for arbitrarily large $\tau$s. 

\bpr[Evanescent collisions] Under the following conditions on $\beta\in\R^3$
\be
\min\left(1, \frac{\beta_1+\beta_2}{2}\right)\geq \beta_3\geq \max\left(2\beta_1+1, \beta_2-1\right)\,.
\label{condbetacoll}
\ee
for any $\tau_0>0$ there exist $\gamma\in(0,+\infty)^3$, $c_0>0$, $C_0>0$ and $\tilde{\theta}>0$ such that for any $\rho_*\in H^{-1}(\T^2)$ such that $\int_{\T^2}\rho_*=1$, any $\delta>0$ and any $\tau\in(\tau_0,+\infty)$ 
\begin{enumerate}
\item for any $h$, for any $t\geq0$,
$$
\nrm h\nrm_{t}\geq
c_0\left(\|h\|^2 
+\tau^{\beta_1}\min\left(1,\tfrac{t}{\tau}\right)\|Ah\|^2
+\tau^{\beta_2}\min\left(1,\tfrac{t}{\tau}\right)^{3}\|Ch\|^2
\right)
$$
and
$$
\nrm h\nrm_{t}\leq 
K_0\left(\|h\|^2 
+\tau^{\beta_1}\min\left(1,\tfrac{t}{\tau}\right)\|Ah\|^2
+\tau^{\beta_2}\min\left(1,\tfrac{t}{\tau}\right)^{3}\|Ch\|^2
\right)\,;
$$
\item for any $h_0\in\cH_0$ the solution to the linear Vlasov-Fokker-Planck equation \eqref{VFP} starting from $h_0$ satisfies for all $t\geq0$
\[
 \nrm h(t,\cdot,\cdot) \nrm_{t}^2 + \tilde{\theta}\int_0^t\,D_{s}(h(s,\cdot,\cdot))\,\emph{\dD} s\,\leq\,\|h_0\|^2\,.
\]
\end{enumerate}
\label{p:unifestcoll}
\epr

\section{Strongly collisional regime}\label{s:strong-collisions}

In view of Lemmas~\ref{lem:estQ} and~\ref{lem:estR}, to analyze terms that have been left over in the foregoing section, we only need to consider
\be
\label{quadrem}
\begin{array}{rclcl}
\ds 
Q^\delta_t(h)&:=&\ds
\tau^{\beta_2}\ \min\left(1,\tfrac{t}{\tau}\right)^3\ \|Ah\|\ \|Ch\|
&+&\ds
\tau^{\beta_2}\ \min\left(1,\tfrac{t}{\tau}\right)^3\ \|h\|\ \|Ch\|\\
&+&\ds
\tau^{\beta_3}\ \min\left(1,\tfrac{t}{\tau}\right)^2\ \|Ah\|^2
&+&\ds
\tau^{\beta_3}\ \min\left(1,\tfrac{t}{\tau}\right)^2\ \|h\|\ \|Ch\|\\
&+&\ds
\tau^{\beta_3}\ \min\left(1,\tfrac{t}{\tau}\right)^2\ \|h\|\ \|Ah\|&&\\
&=:&\ds\frac{1}{\delta^2}\ \sum_{i=1}^5\, S_{i,t}(h)&&
\end{array}
\ee
and choosing some $\eta\in(0,1)^9$ and $\teta_1\in(0,1)$
\be
  \ds \cR^\delta_t(h,g,\tg)\ :=\ \frac{1}{\delta^2}\ \left(\,T_{1,t}(h,g,\tg)\,\chi_{t\geq\tau}\,+\,\tT_{1,t}(h,g,\tg)\,\chi_{t\leq\tau}\,\right) \ +\ \frac{1}{\delta^2}\ \sum_{i=2}^9\, T_{i,t}(h,g,\tg)
  \label{trilinrem}
\ee
where
\[
\begin{array}{rcl}
\ds T_{1,t}(h,g,\tg) &=&\ds\|Ah\|\ \|g\|^{1-\eta_1}\ \|Cg\|^{\eta_1}\ \|\tg\|\,,\\[0.75em]
\ds \tT_{1,t}(h,g,\tg) &=&\ds\|Ah\|\ \|g\|\ \|\tg\|^{1-\teta_1}\ \|C\tg\|^{\teta_1}\,,\\[0.75em]
\end{array}
\]
\[
\begin{array}{rcl}
\ds T_{2,t}(h,g,\tg) &=&\ds\tau^{\beta_1}\ \min\left(1,\tfrac{t}{\tau}\right)\ \|Ah\|\ \|g\|^{1-\eta_2}\ \|Cg\|^{\eta_2}\ \|\tg\|\,,\\[0.75em]
\ds T_{3,t}(h,g,\tg) &=&\ds\tau^{\beta_1}\ \min\left(1,\tfrac{t}{\tau}\right)\ \|A^2h\|\ \|\tg\|^{1-\eta_3}\ \|C\tg\|^{\eta_3}\ \|Ag\|\,,\\[0.75em]
\end{array}
\]
\[
\begin{array}{rcl}
\ds T_{4,t}(h,g,\tg) &=&\ds\tau^{\beta_2}\ \min\left(1,\tfrac{t}{\tau}\right)^3\ \|ACh\|\ \|Cg\|\ \|\tg\|^{1-\eta_4}\ \|C\tg\|^{\eta_4}\,,\\[0.75em]
\ds T_{5,t}(h,g,\tg) &=&\ds\tau^{\beta_2}\ \min\left(1,\tfrac{t}{\tau}\right)^3\ \|ACh\|\ \|\tg\|^{1-\eta_5}\ \|C\tg\|^{\eta_5}\ \|g\|^{\eta_5}\ \|Cg\|^{1-\eta_5}\,,\\[0.75em]
\end{array}
\]
and
\[
\begin{array}{rcl}
\ds T_{6,t}(h,g,\tg) &=&\ds\tau^{\beta_3}\ \min\left(1,\tfrac{t}{\tau}\right)^2\ \|A^2h\|\ \|\tg\|^{1-\eta_6}\|C\tg\|^{\eta_6}\ \|Cg\|\,,\\[0.75em]
\ds T_{7,t}(h,g,\tg) &=&\ds\tau^{\beta_3}\ \min\left(1,\tfrac{t}{\tau}\right)^2\ \|A^2h\|\ \|\tg\|^{1-\eta_7}\ \|C\tg\|^{\eta_7}\ \|g\|^{\eta_7}\ \|Cg\|^{1-\eta_7}\,,\\[0.75em]
\ds T_{8,t}(h,g,\tg) &=&\ds\tau^{\beta_3}\ \min\left(1,\tfrac{t}{\tau}\right)^2\ \|Ch\|\ \|g\|^{1-\eta_8}\ \|Cg\|^{\eta_8}\ \|\tg\|\,,\\[0.75em]
\ds T_{9,t}(h,g,\tg) &=&\ds\tau^{\beta_3}\ \min\left(1,\tfrac{t}{\tau}\right)^2\ \|ACh\|\ \|\tg\|^{1-\eta_9}\|C\tg\|^{\eta_9}\ \|Ag\|\,.\\[0.5em]
\end{array}
\]
We also need to take into account the electric field contributions in time derivatives by augmenting $\nrm \cdot\nrm_{\gamma,\,\beta,\,\tau,\,t}^2$ to  $\cE_{\gamma,\,\beta,\,\tau,\delta,\,t}$ defined by
\[
 \cE_{\gamma,\,\beta,\,\tau,\delta,\,t}(h) = \nrm h \nrm_{\gamma,\,\beta,\,\tau,\,t}^2 + \delta^2(1 + \gamma_1\tau^{\beta_1}\min\left(1,\tfrac{t}{\tau}\right))\|E\|_{L^2}^2\,.
\]
In the following, when dependencies in parameters are not crucial we may drop some indices on this functional by writing  simply  $\cE_t$ in order to lighten notations. 

Prior to carrying on our nonlinear analysis in the diffusive regime, for reading's sake we make a specific choice of $\beta$, namely $\beta=(0,2,1)$, and fix a corresponding $\gamma$ accordingly. This choice is motivated by the following remark.

\br[Optimality of $\beta$]
From Proposition~\ref{p:unifestdiff} 
and Lemma~\ref{lem:poincare2},
one may derive exponential decay of $\nrm h(t,\cdot,\cdot) \nrm_{\gamma,\,\beta,\,\tau,\,t}$ when $h$ solves \eqref{VFP}, explicitly encoded by a rate  $e^{-\theta\tau^{\max(\beta_3,-1,\beta_3-\beta_2)}t}$ for some uniform $\theta>0$. To optimize the former decay rate, one must minimize $\max(\beta_3,-1,\beta_3-\beta_2)$ under constraints \eqref{condbetadiff}. The optimal choice requires actually $\beta_3=1$, that forces $\beta_1=0$ and $\beta_2=2$. Indeed the existence of a $\beta_3$ satisfying constraint~\eqref{condbetadiff} is equivalent to
$$
\beta_1\geq0\,,\quad \beta_2\geq 2\,,\quad 3\beta_1\geq\beta_2-2\,,\quad \beta_1\leq\beta_2-2
$$
which is compatible with $(\beta_1+\beta_2)/2\leq1$ only if $\beta_1=0$ and $\beta_2=2$. In turn, the corresponding choice of $\beta$ does satisfy \eqref{condbetadiff}.
\label{r:optimaldiff}
\er

Last preliminary results are provided by the following proposition. 

\bpr[$\tau\lesssim 1$;\; $\beta=(0,2,1)$]
Set $\beta=(0,2,1)$. For any $\tau_0>0$ and any $\gamma\in(0,\infty)^3$ satisfying corresponding conditions of Proposition~\ref{p:unifestdiff}, for any $\rho_*\in W^{1,p}(\T^2)$, $p>2$, such that $\int_{\T^2}\rho_*=1$, any $\delta_0>0$ and any $\teta_1\in(0,1)$, there exist $\eta\in(0,1)^9$ and $K>0$ such that for any $t\geq0$, any $\tau\in(0,\tau_0)$, any $\delta\in(\delta_0,\infty)$ and any $(h,g,\tg)\in(\cH_0)^3$
\[
\begin{array}{rcl}
\ds Q^\delta_t(h)&\leq&\ds\frac{K}{\delta^2}\ D_{t}(h)\,,\\
&&\\
 \ds \cR^\delta_t(h,g,\tg)&\leq&\ds\frac{K}{\delta^2} \ \left[D_{t}(h)\right]^{\tfrac12}\left[\left(D_{t}(g)\right)^{\tfrac12}\  +\  \tau^{\tfrac12-\teta_1}\,\left(\tfrac{t}{\tau}\right)^{-\frac{3\teta_1}{2}}\,\chi_{t\leq\tau}\,\|g\|\right]\left[\cE_t(\tg)\right]^{1/2}\,.
 \end{array}
\]
\label{p:remdiff}
\epr
\begin{proof}
Besides obvious estimates we point out that Poincar\'e's inequality implies that for any $h\in\cH_0$
$$
\tau^{1/2}\,\min\left(1,\tfrac{t}{\tau}\right)\,\|h\|\,\leq\,K\,\left[D_{t}(h)\right]^{1/2}
$$
uniformly in $\tau\leq\tau_0$, $\delta\geq\delta_0$, $t\geq0$. With this in hand one readily deduce for any $h\in\cH_0$
 \[
 \begin{array}{rcl}
  S_{1,t}(h) + S_{3,t}(h)&\leq& K\,\tau^2\, \min\left(1,\tfrac{t}{\tau}\right)^2\,D_{t}(h)\,,\\[.75em]
  S_{2,t}(h) + S_{5,t}(h)&\leq& K\,\tau\, \min\left(1,\tfrac{t}{\tau}\right)\,D_{t}(h)\,,\\[.75em]
  S_{4,t}(h)&\leq& K\,D_{t}(h)\,,
  \end{array}
 \]
hence proving the first estimate. Likewise one obtains for any $(h,g,\tg)\in(\cH_0)^3$
 \[
 \begin{array}{rcl}
  T_{1,t}(h,g,\tg)&\leq& K\,\min\left(1,\tfrac{t}{\tau}\right)^{-1}\,D_{t}(h)^{1/2}\,D_{t}(g)^{1/2}\,\cE_{t}(\tg)^{1/2}\,,\\[.75em]
  \tT_{1,t}(h,g,\tg)&\leq& K\,\tau^{1-\teta_1}\,D_{t}(h)^{1/2}\,\left[\tau^{-1/2}\min\left(1,\tfrac{t}{\tau}\right)^{-3\teta_1/2}\|g\|\right]\,\cE_{t}(\tg)^{1/2}\,,\\[.75em]
  T_{2,t}(h,g,\tg) + T_{8,t}(h,g,\tg)&\leq& K\,D_{t}(h)^{1/2}\,D_{t}(g)^{1/2}\,\cE_{t}(\tg)^{1/2}\,,
  \end{array}
 \]
 and for $i\in\{3,4,5,6,7,9\}$
 \[
  T_{i,t}(h,g,\tg)\ \leq\  K\,\tau^{1-2\eta_i}\,\min\left(1,\tfrac{t}{\tau}\right)^{\frac{1-3\eta_i}{2}}\,D_{t}(h)^{1/2}\,D_{t}(g)^{1/2}\,\cE_{t}(\tg)^{1/2}\,.
 \]
This yields the second estimate by choosing $\eta_i \in (0,1/3]$ for $i\in\{3,4,5,6,7,9\}$.
\end{proof}

\br[Follow-up on the optimality of $\beta$]
Estimates of Proposition~\ref{p:remdiff} will be used to set up a contraction argument with $\beta=(0,2,1)$ for large enough $\delta$. As pointed out in Remark~\ref{r:optimaldiff}, this choice of $\beta$ is motivated by our will to optimize decay rates. However one may wonder whether with a different choice of $\beta$ one could improve the foregoing estimates and set up a contraction argument using smallness of $\tau$ and not largeness of $\delta$, hence allowing for asymptotically vanishing $\delta$ (possibly in a $\tau$-dependent way). Unfortunately, the answer is negative since our estimate of $S_{4,t}$ is actually independent of $\tau$ and $\beta$.
\label{r:optimaldiff-bis}
\er

\noindent{\bf Proof of Theorem~\ref{t:maindiff}.}
To prove Theorem~\ref{t:maindiff}, we introduce 
\[
X\ =\ \left\{\ h\in L^\infty(\RR_+; \cH)\ \middle|\ \cE(h)<\infty\ \right\}
\qquad\textrm{and}\qquad
Y\ =\ \left\{\ h\in L^\infty(\RR_+; \cH)\ \middle|\ \cF(h)<\infty\ \right\}
\]
endowed with norms $\sqrt{\cE}$ and $\sqrt{\cF}$. For any $R>0$ we denote by $X_R$ and $Y_R$ the closed balls of center $0$ and radius $R$ of Banach spaces $X$ and $Y$. We recall that $\cE$ and $\cF$ are defined by 
$$
\cF(h)\ =\ \cE(h) + \theta\,D(h)
$$
where
$$
\cE(h)\,=\,\sup_{t\geq0}\cE_t(h(t,\cdot,\cdot))
\qquad\qquad\textrm{and}\qquad\qquad
D(h)\,=\,\int_0^\infty D_t(h(t,\cdot,\cdot))\,\dD t\,.
$$

We fix $R_0>0$ and choose $h_0\in\cH_0$ such that $\|h_0\|\leq R_0$ and for a suitable $R>0$ we consider the map $\Phi\,:\,X_R\to L_{loc}^\infty(\RR_+; \cH),$ $\tg\,\mapsto h$ where $h$ starts from $h_0$ and solves the linear equation
\be
\partial_th \,+\, L_\tau h \,-\, E\cdot v\ =\ \tilde{E} \cdot A^*h
\label{linVPFP}
\ee
where $E$ and $\tE$ are obtained through the Poisson equation from respectively $h$ and $\tg$. Existence and uniqueness in $\mathcal{C}(\R_+; \cH)$  for \eqref{linVPFP} may be shown for instance using arguments \cite[Proposition 5.1]{Herau_FP-confining} (adapted to our space-periodic setting that does not involve a confining potential) in two steps. First, when considered as given source terms, $E\cdot v$ and $\tilde{E} \cdot A^*h$ satisfy the hypotheses of \cite[Proposition 5.1]{Herau_FP-confining} thanks to estimates
\[
 \int_0^T\left|\lla E\cdot v, \varphi\rra\right|\,\dD t\ \leq\ \frac{K}{\delta^2}\|h\|_{L^\infty(0,T; \cH)}\,\|A\varphi\|_{L^2(0,T; \cH)}\,\sqrt{T}
\]
and
\[
  \int_0^T\left|\lla\tilde{E} \cdot A^*h, \varphi\rra\right|\,\dD t\ \leq\ \frac{K\,R}{\delta^2}\|h\|_{L^\infty(0,T; \cH)}\,\|A\varphi\|_{L^2(0,T; \cH)}\,\left(\int_0^T\min(1,\tfrac{t}{\tau})^{-3\eta}\right)^{1/2}
\]
for any $\varphi\in L^2(0,T;\cH)$ such that $A\varphi\in L^2(0,T;\cH)$ and $\eta>0$  small enough. 
Note that the latter estimate is obtained in the same way $\cR_0$ was bounded in Lemma~\ref{lem:estR}. 
Then, by a fixed point argument in $\mathcal{C}(0,t_0; \cH)$ for a sufficiently small $t_0$, one builds a unique solution to \eqref{linVPFP}. Since $t_0$ can be chosen independently  of the initial data one may repeat the argument to eventually get a global solution.

Our goal is to show that when $\delta$ is large enough one may choose $R$ sufficiently large (independently of $\delta$) such that $\Phi(X_R)\subset Y_R$ and $\Phi$ is a strict contraction (with uniform constant) for norms $\sqrt{\cE}$ and $\sqrt{\cF}$. Since fixed points of $\Phi$ are exactly solutions of \eqref{VPFPabstract} starting from $h_0$, this will prove altogether the existence of a solution in $Y_R$, its uniqueness in $X_R$, uniform bounds on the solution and smooth dependence on $h_0$. To extend the uniqueness result one shall only need to remark that the above argument may be localized in time and to use a continuity argument based on the fact that any solution belongs to a suitable time-localized version of $X_R$ for sufficiently small time.

Let us be more precise on the order in which parameters are chosen. Positive parameters $\tau_0$ and $R_0$ are given data and we choose a first $\delta_0>0$ arbitrarily, say $\delta_0=1$. Then we may set $\beta=(0,2,1)$ and a suitable $\gamma$ is provided by Proposition~\ref{p:unifestdiff}, constants in corresponding estimates being uniform in the range $\tau\leq\tau_0$, $\delta\geq\delta_0$. The parameter $\theta$ could be chosen essentially arbitrarily but it is convenient to set $\theta=\ttheta/2$ where $\ttheta$ is provided by Proposition~\ref{p:unifestdiff}. It turns out that we may also choose $R>R_0$ arbitrarily, say $R=2R_0$. 

\smallskip

\emph{Step 1, $\Phi(X_R)\subset Y_R$}. Now we show that we may choose $\delta_0'\geq\delta_0$ such that for any $\delta\geq\delta'_0$, $\tau\leq\tau_0$ and $\|h_0\|\leq R_0$, we do have that $\Phi(X_R)\subset Y_R$.
Combining Propositions~\ref{p:unifestdiff} and~\ref{p:remdiff}, we obtain indeed that 
 for any $(\tau,\delta,h_0)$ as above, for any $\tg\in X_R$, $h=\Phi(\tg)$ satisfies for any $t\geq0$, for some constants $K'$ and $K$ depending only on $\teta_1\in(0,1)$
\[
\begin{array}{rcl}
\ds
\cE_t(h(t))&+&\ds
\ttheta \int_0^t D_s(h(s))\,\dD s\\
&\leq&\ds
\|h_0\|^2\,+\,
K'\,\int_0^t\left[Q_s^\delta(h(s))\,+\,R_s^\delta(h(s),h(s),\tg(s))\right]\,\dD s\\[.75em]
&\leq&\ds
\|h_0\|^2\,+\,
\frac{K\,(1+R)}{\delta^2}\left(\int_0^t D_s(h(s))\,\dD s\,+\,\int_0^{\min(t,\tau)}
\tau^{1-2\teta_1}\,\left(\tfrac{s}{\tau}\right)^{-3\teta_1}
\|h(s)\|^2\,\dD s\right)
\end{array}
\]
hence, provided that $(1+R)/(\delta_0')^2$ is sufficiently small one has
\[
 \cE_t(h(t))\,+\,\ds
\theta \int_0^t D_s(h(s))\,\dD s\ \leq\ \|h_0\|^2\,+\,
\frac{K\,(1+R)}{\delta^2}\,\int_0^{\min(t,\tau)}
\tau^{1-2\teta_1}\,\left(\tfrac{s}{\tau}\right)^{-3\teta_1}
\|h(s)\|^2\dD s\,,
\]
and choosing $\teta_1\in(0,\tfrac13)$ yields for any $t\geq0$ and some constant $K$ depending only on $\teta_1$
$$
\|h(t)\|^2
\,\leq\,
\|h_0\|^2\,\eD^{\frac{K\,(1+R)\,\tau^{2(1-\teta_1)}}{\delta^2}}
$$
therefore for any $t\geq0$ and some constant $K$ depending only on $\teta_1$
$$
\cF(h)\,\leq\,\|h_0\|^2\,\left(1+\frac{K\,(1+R)\,\tau^{2(1-\teta_1)}}{\delta^2}\eD^{\frac{K\,(1+R)\,\tau^{2(1-\teta_1)}}{\delta^2}}\right)
$$
which can be made smaller than $R^2$ provided that $(1+R)/(\delta_0')^2$ is small enough. It follows that $\Phi$ is well-defined from $X_R$ to $Y_R$.

\smallskip

\emph{Step 2, Contraction}. Now we show that $\Phi$ is a strict contraction from $X_R$ to $Y_R$. Provided that $(1+R)/(\delta_0')$ is sufficiently small, for any $(\tau,\delta,h_0)$ as above, for any data $(\tg_1,\tg_2)\in (X_R)^2$, values $h_1=\Phi(\tg_1)$ and $h_2=\Phi(\tg_2)$ satisfy 
\[
 (\partial_t + L_\tau)(h_1-h_2)\ +\ (E_1-E_2)\cdot v\ =\ (\tE_1-\tE_2)\cdot A^*h_1 + \tE_2\cdot A^*(h_1 - h_2)
\]
(with obvious implicit notation for electric fields), thus, for some constant $K$, for any $t\geq0$,
$$
\cE_t((h_1-h_2)(t))\ +\ \tilde{\theta}\,\int_0^tD_{s}((h_1-h_2)(s))\;\dD s\ 
\leq\ \frac{K\,(1+R)}{\delta^2}\,\cF(h_1-h_2)^{\tfrac12}\,\cE(\tg_1-\tg_2)^{\tfrac12}
$$
as may be derived using that $h_1\in Y_R$ and $h_2\in Y_R$. Factoring out $\cF(h_1-h_2)^{\tfrac12}$, it follows that $\Phi$ is Lipschitz from $X_R$ to $Y_R$ and that its Lipshitz constant may be assumed arbitrarily small provided that $(1+R)/(\delta_0')^2$ is sufficiently small.
This is sufficient to lead to the well-posedness part of Theorem~\ref{t:maindiff}

\smallskip

\emph{Step 3, Exponential Decay}. The large-time decay may then be deduced from the Poincar\'e inequality. Indeed the foregoing arguments provide for any $t_2\geq t_1\geq \tau$
$$
\cE_{t_2}(h(t_2))\,+\,\theta\,\int_{t_1}^{t_2}\,D_s(h(s))\dD s\,\leq\,\cE_{t_1}(h(t_1))\,,
$$
and thanks to Lemma~\ref{lem:poincare2}, for some $K>0$, our choice of parameters yields uniformly for $(\tau,\delta,h_0)$ as above that for all $t\geq\tau$
\[
 K\,\tau\,\cE_{t}(h(t))\leq D_t(h(t))\,.
\]
Thanks to a backward Grönwall-like argument, this leads to 
$$
\cE_t(h(t))\,\leq\,\eD^{-\theta'\,\tau\,t}\cE_\tau(h(\tau))
\,\leq\,K'\,\eD^{-\theta'\,\tau\,t}\|h_0\|
$$
for any $t\geq\tau$ and some uniform positive $K'$ and $\theta'$. This may be extended to all $t$ using the uniform boundedness of $\cE_t(h(t))$. Similar arguments prove the uniform stability with respect to initial data.

\section{The regime of evanescent collisions}\label{s:evanescent}

\br[Optimality of $\beta$]
From Proposition~\ref{p:unifestcoll} and Lemma~\ref{lem:poincare2}, one may exponential decay in $\cH$ of solutions to \eqref{VFP}, explicitly encoded by a rate  $e^{-\theta\tau^{\min(\beta_3,-1, \beta_3-\beta_2)}t}$ for some uniform $\theta>0$. In order to optimize the former decay rate under \eqref{condbetacoll}, first observe that the latter constraint implies $\beta_3-\beta_2\geq-1$ hence $\min(\beta_3,-1, \beta_3-\beta_2)=\min(\beta_3,-1)$. Thus, we only need to ensure that
$\beta_3\geq -1$ and there is a large choice of $\beta$ that meet this constraint jointly with \eqref{condbetacoll}. Indeed for any $-2\leq\beta_1\leq0$ one may choose $\beta_2$ such that
$$
\max\left(3\beta_1+2,-\beta_1-2\right)\leq \beta_2\leq \beta_1+2
$$
and then a suitable $\beta_3$ may be chosen according to
$$
\min\left(1, \frac{\beta_1+\beta_2}{2}\right)\geq \beta_3\geq \max\left(-1,2\beta_1+1, \beta_2-1\right)
$$
and that defines a non empty interval of $\beta_3$s.
\label{r:optimalcoll}
\er

A natural guide towards a good choice of $\beta$ could be the examination of the best analogue of Proposition~\ref{p:remdiff} in the regime where $\tau$ is large. However, another thing that also differs from the strongly collisional regime is that the main obstruction here does not arise from quadratic terms. Indeed one may prove the following estimates.

\bpr[$\tau\gtrsim 1$;\; $\beta=(-1,-1,-1)$]
Set $\beta=(-1,-1,-1)$. For any $\tau_0>0$ and any $\gamma\in(0,\infty)^3$ satisfying corresponding conditions of Proposition~\ref{p:unifestcoll}, for any $\rho_*\in W^{1,p}(\T^2)$, $p>2$, such that $\int_{\T^2}\rho_*=1$, any $\delta_0>0$, there exists $K>0$ such that for any $t\geq0$, any $\tau\in(\tau_0,\infty)$, any $\delta\in(\delta_0,\infty)$ and any $h\in\cH_0$
\[
Q^\delta_t(h)\,\leq\,\frac{K}{\delta^2}\ D_{t}(h)\,.
\]
\label{p:remcolllin}
\epr

Since we shall not make any use of the former proposition we skip its proof. Yet let us point out that the involved uniform estimate enforces $\beta_j\leq-1$, $j=1$, $2$, $3$. Indeed constraints~\eqref{condbetacoll} implies $\beta_1\leq0$ and, under this condition, the best possible estimates
$$
\begin{array}{rcl}\ds
S_{1,t}(h)&\leq&\ds
K\,\tau^{\frac{2\beta_2+1-\beta_3}{2}}\, \min\left(1,\tfrac{t}{\tau}\right)^2\,D_{t}(h)\,,\\[.5em]\ds
S_{3,t}(h)&\leq&\ds
K\,\tau^{1+\beta_3}\, \min\left(1,\tfrac{t}{\tau}\right)^2\,D_{t}(h)\,,
\end{array}
$$
provide uniform bounds only when $2\beta_2+1\leq\beta_3$ and $\beta_3\leq -1$, which jointly with \eqref{condbetacoll} yield the claimed constraint. Note also that if moreover one requires $\beta_3\geq -1$ then the only possible choice is indeed $\beta=(-1,-1,-1)$.

Unfortunately, in the regime $\tau\gtrsim1$, trilinear terms leads to a more stringent constraint on $\delta$ and the foregoing choice $\beta=(-1,-1,-1)$ does not minimize trilinear constraints. Indeed
\[
\begin{array}{rcl}
T_{1,t}(h,g,\tg)&\leq& K\,\tau^{1-\tfrac{\eta_1}{2}\,(\beta_3+1)}\min\left(1,\tfrac{t}{\tau}\right)^{-1}\,D_{t}(h)^{1/2}\,D_{t}(g)^{1/2}\,\cE_{t}(\tg)^{1/2}\,,\\[.75em]
\tT_{1,t}(h,g,\tg)&\leq&K\,\tau^{\frac{1+\max(1,-\beta_3)-\teta_1\beta_2}{2}}\,D_{t}(h)^{1/2}\,\left[\tau^{-1/2}\min\left(1,\tfrac{t}{\tau}\right)^{-3\teta_1/2}\|g(t)\|\right]\,\cE_{t}(\tg)^{1/2}\,,
\end{array}
\]
with $0<\eta_1\leq1$ and $0<\teta_1\leq1$, provides bounds that grow superlinearly in $\tau$ unless $\beta_2\geq0$ and $\beta_3\geq-1$. Note in turn that in order not to exceed a linear growth in $\tau$, the bound on $S_{3,t}$ only requires $\beta_3\leq0$. Now let us observe that in the final argument $\teta_1$ is constrained by $\teta_1<\tfrac13$ and that constraints~\eqref{condbetacoll} yield 
$$
\tfrac32\beta_3+\tfrac12\,\leq\,\beta_2\,\leq\,\beta_3+1\,.
$$
In turn minimizing 
$$
\max(1-\tfrac16\beta_2,\beta_2-\tfrac12\beta_3+\tfrac12)
$$ 
under these constraints proves that one cannot do better than a $\tau^{\tfrac{14}{15}}$-growth and that one may hope to (almost) realize it only with 
$$
\beta=(-\frac{8}{15},\frac25,-\frac{1}{15})\,.
$$
As the following proposition proves this turns out to be indeed possible.

\bpr[$\tau\gtrsim 1$;\; $\beta=(-\tfrac{8}{15},\tfrac25,-\tfrac{1}{15})$]
Set $\beta=(-\tfrac{8}{15},\tfrac25,-\tfrac{1}{15})$. For any $\eps>0$, any $\tau_0>0$ and any $\gamma\in(0,\infty)^3$ satisfying corresponding conditions of Proposition~\ref{p:unifestcoll}, for any $\rho_*\in W^{1,p}(\T^2)$, $p>2$, such that $\int_{\T^2}\rho_*=1$ and any $\delta_0>0$, there exist $\eta\in(0,1)^9$, $\teta_1\in(0,1)$ and $K>0$ such that for any $t\geq0$, any $\tau\in(\tau_0,+\infty)$, any $\delta\in(\delta_0,\infty)$ and any $(h,g,\tg)\in(\cH_0)^3$
\[
\begin{array}{rcl}
\ds Q^\delta_t(h)&\leq&\ds\frac{K\,\tau^{\tfrac{14}{15}}}{\delta^2}\ D_{t}(h)\,,\\
&&\\
 \ds \cR^\delta_t(h,g,\tg)&\leq&\ds\frac{K\,\tau^{\tfrac{14}{15}+\eps}}{\delta^2} \ \left[D_{t}(h)\right]^{\tfrac12}\left[\left(D_{t}(g)\right)^{\tfrac12}\  +\  \tau^{-\tfrac12}\,\left(\tfrac{t}{\tau}\right)^{-\frac{3\teta_1}{2}}\,\chi_{t\leq\tau}\,\|g\|\right]\left[\cE_t(\tg)\right]^{1/2}\,.
 \end{array}
\]
\label{p:remcoll}
\epr

\begin{proof}
We have already shown how to bound $S_{1,t}$ and $S_{3,t}$. Moreover $S_{2,t}$ may be bounded as $S_{1,t}$, and $S_{4,t}$ and $S_{5,t}$ as $S_{3,t}$. Likewise  
$$
S_{4,t}(h)\,\leq\,
K\,\tau^{\tfrac12(1+\beta_3)}\,D_{t}(h)=K\,\tau^{\tfrac{7}{15}}\,D_{t}(h)\,.
$$
We have already explained how to bound $\tT_{1,t}$, we only need to add that $\teta_1$ is chosen as $\teta_1=\tfrac13-5\eps$ if $\eps<\tfrac{1}{15}$, and arbitrarily otherwise. Besides
\[
\begin{array}{rcl}
T_{1,t}(h,g,\tg)&\leq& K\,\tau^{1-\tfrac{\eta_1}{2}\,(\beta_3+1)}\min\left(1,\tfrac{t}{\tau}\right)^{-1}\,D_{t}(h)^{1/2}\,D_{t}(g)^{1/2}\,\cE_{t}(\tg)^{1/2}\,,\\[.75em]
T_{2,t}(h,g,\tg)&\leq& K\,\tau^{\beta_3-\tfrac{\eta_2}{2}(\beta_3+1)}\,D_{t}(h)^{1/2}\,D_{t}(g)^{1/2}\,\cE_{t}(\tg)^{1/2}\,,\\[.75em]
T_{8,t}(h,g,\tg)&\leq& K\,\tau^{\tfrac{1-\eta_8}{2}(\beta_3+1)}\,D_{t}(h)^{1/2}\,D_{t}(g)^{1/2}\,\cE_{t}(\tg)^{1/2}\,,
\end{array}
\]
which are shown to be sufficient by choosing $\eta_1=\tfrac17$ and any $\eta_2$, $\eta_8$. At last
 \[
\begin{array}{rcl}
  T_{3,t}(h,g,\tg)&\leq&K\,\tau^{\tfrac12\beta_1-\tfrac{\eta_3}{2}\beta_2+1}\,\min\left(1,\tfrac{t}{\tau}\right)^{\frac{1-3\eta_3}{2}}\,D_{t}(h)^{1/2}\,D_{t}(g)^{1/2}\,\cE_{t}(\tg)^{1/2}\,,\\[.75em]
  T_{4,t}(h,g,\tg)&\leq&K\,\tau^{\tfrac{1-\eta_4}{2}\beta_2-\tfrac12\beta_3+\tfrac12}\,\min\left(1,\tfrac{t}{\tau}\right)^{\frac{1-3\eta_4}{2}}\,D_{t}(h)^{1/2}\,D_{t}(g)^{1/2}\,\cE_{t}(\tg)^{1/2}\,,\\[.75em]
  T_{5,t}(h,g,\tg)&\leq&K\,\tau^{\tfrac12\beta_2-\tfrac12\beta_3+\tfrac12+\tfrac{\eta_5}{2}(1+\beta_3-\beta_2)}\,\min\left(1,\tfrac{t}{\tau}\right)^{\frac{1-3\eta_5}{2}}\,D_{t}(h)^{1/2}\,D_{t}(g)^{1/2}\,\cE_{t}(\tg)^{1/2}\,,\\[.75em]
  T_{6,t}(h,g,\tg)&\leq&K\,\tau^{-\tfrac12\beta_1-\tfrac{\eta_6}{2}\beta_2+\tfrac12\beta_3+\tfrac12}\,\min\left(1,\tfrac{t}{\tau}\right)^{\frac{1-3\eta_6}{2}}\,D_{t}(h)^{1/2}\,D_{t}(g)^{1/2}\,\cE_{t}(\tg)^{1/2}\,,\\[.75em]
  T_{7,t}(h,g,\tg)&\leq&K\,\tau^{-\tfrac12\beta_1+\tfrac12\beta_3+\tfrac12+\tfrac{\eta_7}{2}(1+\beta_3-\beta_2)}\,\min\left(1,\tfrac{t}{\tau}\right)^{\frac{1-3\eta_7}{2}}\,D_{t}(h)^{1/2}\,D_{t}(g)^{1/2}\,\cE_{t}(\tg)^{1/2}\,,\\[.75em]
  T_{9,t}(h,g,\tg)&\leq&K\,\tau^{-\tfrac{(1+\eta_9)}{2}\beta_2+\beta_3+1}\,\min\left(1,\tfrac{t}{\tau}\right)^{\frac{1-3\eta_9}{2}}\,D_{t}(h)^{1/2}\,D_{t}(g)^{1/2}\,\cE_{t}(\tg)^{1/2}\,,
\end{array}
 \]
which are themselves shown to be sufficient by choosing $\eta_i$, $i\in\{3,4,5,6,7,9\}$, arbitrarily in $(0,\tfrac13]$.
\end{proof}

With this in hand the proof of Theorem~\ref{t:maincoll} follows as in Theorem~\ref{t:maindiff}.

\section{Asymptotic models in the diffusive regime}\label{s:asymptotics}

This section is devoted to the proof of Theorem~\ref{t:asymptotics}. So far we have aimed at global-in-time estimates and therefore what exactly was the reference time scale was immaterial. Now we turn to asymptotics that are uniform only locally in time thus we explicitly introduce a reference time in the equations. Namely, after choosing an observation time $\tref=\tref(\tau)$ in a $\tau$-dependent way, we observe that if $f$ solves the original system then $\fref$ defined by $\fref(t,x,v)=f(\tref\,t,x,v)$ and $\href=(\fref-\finf)/\finf$ are such that
\be
\left\{
\begin{array}{l}
\ds \frac{1}{\tref(\tau)}\d_{t}\href \,+\,L_\tau\href\,=\,\Eref\cdot A^*(\funcun)\,+\,\Eref\cdot A^*\href,\\
\,\\
\ds \Eref\ =\ \frac{1}{\delta^2}\nabla_{x}\Delta_x^{-1}\nref\,,
\qquad \nref\ =\ \int_{\R^3}\,\href\,\finf\,\dD{v}.
\end{array}
\right.
\label{VPFPscaled}
\ee
with initial data $h_0$.

We already know that for any $\tau_0>0$, $R_0>0$ there exists $\delta_0$ and uniform positive constants $K$ and $\theta$ such that when $\delta>\delta_0$ and $\|h_0\|\leq R_0$, for any $t\geq0$
$$
\begin{array}{rcl}\ds
\|\href(t)\|^2&+&\ds
\min\left(1,\tfrac{\tref}{\tau}t\right)\,\|A\href(t)\|^2
\,+\,\tau^2\min\left(1,\tfrac{\tref}{\tau}t\right)^3\,\|C\href(t)\|^2\\[0.5em]
&+&\ds
\frac{\tref}{\tau}\int_0^t\,\|A\href(s)\|^2\dD s
\qquad+\qquad
\tref\,\tau\int_0^t\min\left(1,\tfrac{\tref}{\tau}s\right)^2\,\|C\href(s)\|^2\dD s\\[0.5em]
&+&\ds
\frac{\tref}{\tau}\int_0^t\min\left(1,\tfrac{\tref}{\tau}s\right)\,\|A^2\href(s)\|^2\dD s
\,+\,
\tref\,\tau\int_0^t\min\left(1,\tfrac{\tref}{\tau}s\right)^3\,\|AC\href(s)\|^2\dD s
\\[0.5em]
&\leq&\ds
K\,\|h_0\|^2
\end{array}
$$
and
$$
\|\href(t)\|\,\leq\,K\,\eD^{-\theta\,\tref\,\tau\,t}\|h_0\|\,.
$$
The latter estimates shows that $\|\href(t)\|$ converges to $0$ uniformly on compacts of $(0,+\infty]$ provided that $\tref(\tau)\tau\stackrel{\tau\to0}{\to}\infty$. The foregoing asymptotic regime is stationary. This proves part $(i)$ of Theorem~\ref{t:asymptotics}.

\subsection{Asymptotically linear free-field regimes}

In the opposite regime where $\tref(\tau)\tau\stackrel{\tau\to0}{\to}0$ we show now that relevant asymptotic models are of evolution type or at least strongly keep track of initial data. However they are linear and one can also drop out convective terms at least in the velocity directions.

Namely, let us denote $\hlin$ the solution to 
$$
\frac{1}{\tref(\tau)}\d_{t}\hlin \,+\,v\cdot\nabla_x\hlin\,+\,\Einf\cdot\nabla_v\hlin\,+\,\frac{1}{\tau}A^*A\hlin\,=\,0
$$ 
with initial data $h_0$. Observe that for any $t\geq0$
$$
\|\hlin(t)\|^2
\,+\,2\,\frac{\tref}{\tau}\int_0^t\,\|A\hlin(s)\|^2\dD s
\,\leq\,\|h_0\|^2\,.
$$

Note that 
$$
\begin{array}{rcl}\ds
\frac{1}{\tref(\tau)}\d_{t}(\href-\hlin)&+&\ds
v\cdot\nabla_x(\href-\hlin)+\Einf\cdot\nabla_v(\href-\hlin)
+\,\frac{1}{\tau}A^*A(\href-\hlin)\\[0.5em]
&=&
\Eref\cdot A^*\href
\,+\,\Eref\cdot A^*(\funcun)\,.
\end{array}
$$
This implies for any $t\geq0$
$$
\begin{array}{rcl}\ds
\|(\href-\hlin)(t)\|^2
&+&\ds
\frac{\tref}{\tau}\,\int_0^t\|A(\href-\hlin)(s)\|^2\dD s\\[0.5em]
&\leq&\ds
\,\tref\,\tau\int_0^t\left(\|\Eref(s)\href(s)\|^2
\,+\,\|\Eref(s)\|^2\right)\,\dD s\\[0.5em]
&\leq&\ds
\frac{K_\eta\,(1+R_0^2)}{\delta^2}
\|h_0\|^2
\times\begin{cases}
\,(\tref\,\tau\,t)^{1-\eta}&\qquad\textrm{when }\quad \tref\,t\geq\tau\\
\,\tau^{2(1-\eta)}&\qquad\textrm{when }\quad \tref\,t\leq\tau
\end{cases}\,\,.
\end{array}
$$
for any $0<\eta<\tfrac13$. The trickiest part of the foregoing bound follows from
$$
\begin{array}{rcl}\ds
\tref\,\tau\int_0^t\|\Eref(s)\href(s)\|^2\,\dD s
&\leq&\ds
\frac{K_\eta}{\delta^2}
\tref\,\tau\int_0^t\|C\href(s)\|^{2\eta}\|\href(s)\|^{2(1-\eta)}\|\href(s)\|^{2}\,\dD s\\[0.5em]
&\leq&\ds
\frac{K_\eta\,R_0^{2(1-\eta)}}{\delta^2}\|h_0\|^2
\tref\,\tau\int_0^t\tfrac{\min\left(1,\tfrac{\tref}{\tau}s\right)^{2\eta}\|C\href(s)\|^{2\eta}}{\min\left(1,\tfrac{\tref}{\tau}s\right)^{2\eta}}\,\dD s\\[0.5em]
&\leq&\ds
\frac{K_\eta\,R_0^2}{\delta^2}\|h_0\|^2
(\tref\,\tau)^{1-\eta}\left(\int_0^t{\min\left(1,\tfrac{\tref}{\tau}s\right)^{-\frac{2\eta}{1-\eta}}}\,\dD s\right)^{1-\eta}\,\,.
\end{array}
$$
obtained by the same argument used to bound $\cR_0$ in Lemma~\ref{lem:estR} and H\"older estimates. The proof of the claim is then achieved by noticing that 
$$
\int_0^t{\min\left(1,\tfrac{\tref}{\tau}s\right)^{-\frac{2\eta}{1-\eta}}}\,\dD s
\,\leq\, K\,\max\left(t,\tfrac{\tau}{\tref}\right)\,.
$$
Hence we are asymptotically close to the linear regime. 

Now let us show that contributions of the linear field terms also vanish in these regimes. Let us denote $\has$ the solution to
$$
\frac{1}{\tref(\tau)}\d_{t}\has \,+\,\frac{1}{\tau}A^*A\has\,=\,0\,,
$$ 
with initial data $h_0\in\cH$, and assume $Ch_0\in\cH$. Note that 
$$
\|\has(t)\|^2
\,+\,2\,\frac{\tref}{\tau}\int_0^t\,\|A\has(s)\|^2\dD s
\,\leq\,\|h_0\|^2\,.
$$
and that, since $C$ and $A$ commute, $C\has$ solves the same equation, hence satisfies a similar estimate with initial data $Ch_0$. Using that $v=A+A^*$, a direct estimate provides
$$
\begin{array}{rcl}\ds
\|(\hlin-\has)(t)\|^2
&+&\ds
\frac{\tref}{\tau}\int_0^t\,\|A(\hlin-\has)(s)\|^2\dD s\\[0.5em]
&\leq&\ds
K\,\tref\,\tau\int_0^t\,\|C\has(s)\|^2\dD s\\[0.5em]
&+&\ds
K\tref\int_0^t\,\left[\|AC\has(s)\|+\|A\has(s)\|\right]\,\|(\hlin-\has)(s)\|\dD s\\[0.5em]
&\leq&\ds
K'\left[\tref\,\tau\,t\,\|Ch_0\|^2+\sqrt{\tref\,\tau\,t}\left(\|h_0\|^2+\|Ch_0\|^2\right)\right]\,.
\end{array}
$$
Since such $h_0$ form a dense set in $\cH$, the corresponding convergence, uniform in $t$ on compact sets of $[0,\infty)$, 
$$
\left\|\hlin(t)-\has(t)\right\|
\stackrel{\tau\to0}{\longrightarrow}0
\qquad\textrm{provided }\qquad \tref(\tau)\,\tau\stackrel{\tau\to0}{\to}0
$$
may be extended to any $h_0\in \cH$. This proves part $(iv)$ of Theorem~\ref{t:asymptotics}.

At last observe that, on one hand, for any $t\geq0$, with $n_0 = \int_{\RR^2}h_0\,M\dD v$
$$
\|\has(t)- n_0\|\,\leq\,\eD^{-\theta\,\frac{\tref}{\tau}\,t}\,\|h_0-n_0\|
$$
for some uniform $\theta>0$, which proves part $(iii)$ of Theorem~\ref{t:asymptotics}. On the other hand, uniformly in $t$ on compact sets of $[0,\infty)$, 
$$
\left\|\has(t)-h_0\right\|
\stackrel{\tau\to0}{\longrightarrow}0
\qquad\textrm{provided }\qquad \tfrac{\tref(\tau)}{\tau}\stackrel{\tau\to0}{\to}0
$$
for any $h_0\in \cH$. The latter follows through a density argument from the explicit estimate
$$
\left\|\has(t)-h_0\right\|
\,\leq\,
\tfrac{\tref(\tau)\,t}{\tau}\,\|A^*A\,h_0\|
$$
that holds when moreover $A^2h_0\in \cH$. This proves part $(v)$ of Theorem~\ref{t:asymptotics}.

\subsection{Nonlinear diffusive regime}\label{s:nonlinasymp}

The remaining regime corresponds to the case where $\tref\,\tau$ is of order $1$ and therefore in this section we set $\tref(\tau)=\tau^{-1}$. Note that with this choice we already know that
$$
\left(\int_0^\infty\|A\href(t)\|^2\,\dD t\right)^{\tfrac12}
\,\leq\,K\,\tau\,\|h_0\|
$$
and this implies that
$$
\left(\int_0^\infty\|\fref(t)-M\rhoref(t)\|_{L^2(M^{-1})}^2\dD t\right)^{\tfrac12}
\,\leq\,K'\,\tau\,\|h_0\|\,.
$$

Our goal is to also identify some asymptotic limiting behavior for $\nref = \rhoref - e^{-\phiinf}$. Note that the proof of Theorem~\ref{t:maindiff} by a contraction argument also provides us with the fact the map $h_0\mapsto \href$ is Lipschitz from $\cH_0$ to $L^\infty(\R_+;\cH_0)$, therefore this is also the case for the map $h_0\mapsto \nref$ from $\cH_0$ to $L^\infty(\R_+;L^2(\T^2))$. To prove convergence in $L_{loc}^\infty(\R_+;L^2(\T^2))$ (without explicit decay rates) we may therefore restrict to a case where also hold $Ah_0\in\cH$, $ACh_0\in\cH$, $Ch_0\in\cH$ and $C^2h_0\in\cH$. The gain we shall use is two-fold. Indeed one may both drop out time weights in our arguments and upgrade it to higher regularity to obtain
$$
\begin{array}{rcl}\ds
\|\href(t)\|^2&+&\ds
\,\|A\href(t)\|^2
\,+\,\|C\href(t)\|^2
\,+\,\|AC\href(t)\|^2
\,+\,\tau^2\,\|C^2\href(t)\|^2\\[0.5em]
&+&\ds
\frac{1}{\tau^2}\int_0^t\,\|A\href(s)\|^2\dD s
\quad+\quad
\frac{1}{\tau^2}\int_0^t\,\|A^2\href(s)\|^2\dD s\\[0.5em]
&+&\ds
\frac{1}{\tau^2}\int_0^t\,\|AC\href(s)\|^2\dD s
\quad+\quad\frac{1}{\tau^2}\int_0^t\,\|A^2C\href(s)\|^2\dD s\\[0.5em]
&+&\ds
\int_0^t\,\|AC^2\href(s)\|^2\dD s\quad+\quad\int_0^t\,\|C\href(s)\|^2\dD s
\quad+\quad\int_0^t\,\|C^2\href(s)\|^2\dD s\\[0.75em]
&\leq&\ds
K\,\left[\|h_0\|^2\,+\,\|Ah_0\|^2\,+\,\|Ch_0\|^2\,+\,\|ACh_0\|^2\,+\,\tau^2\,\|C^2h_0\|^2\right]\,.
\end{array}
$$
It is very important to note however that in order to do so we do not need to restrict further $\delta_0$ in a way that would depend on the size of $\|Ah_0\|$, $\|Ch_0\|$, $\|ACh_0\|$ and $\|C^2h_0\|$. Otherwise this would prevent us from extending the convergence to $h_0\in\cH_0$ by a density argument. In contrast, we will be free to restrict $\tau_0$ in a way depending on above norms. We skip the proof of the foregoing claim as lengthier but otherwise completely similar to estimates that have been proved in detail above.

Now, our starting point is the continuity equation~\eqref{e:n}, that here takes the form
$$
\d_t \nref \,+\,\frac{1}{\tau}\textrm{div}_x(\jref)\,=\,0
$$
where we recall that 
$$
\nref\:=\int_{\R^2}\href\,\finf \dD v\,,\qquad \jref\,:=\,\int_{\R^2} v\,\href\,\finf \dD v\,.
$$
Similarly, a momentum equation 
\be\label{e:j}
\tau\d_t\jref+\frac{1}{\tau}\jref\,=\,\nref\,\Einf\,+\,(\rhoinf+\nref)\,\Eref-\nabla_x\nref-\textrm{div}_x(\Sref)
\ee
may be obtained by multiplying \eqref{VPFPscaled} by $v$ and integrating in the velocity variable, with
$$
\Sref\,=\,
\int_{\R^2} (v\otimes v-\I)\,\href\,\finf \dD v\,.
$$
To give some hints on computations involved in the foregoing derivation, we introduce $\langle\,\cdot\,;\,\cdot\,\rangle_v$ to denote the spatially dependent scalar product on $L^2(\finf\,\dD v)$ and notice that 
$$
\nref\,=\,\langle \funcun ; \href \rangle_v\,,\qquad \jref\,=\, \langle A^*(\funcun) ; \href \rangle_v\,=\,\langle \funcun ; A\href \rangle_v\,,\qquad
\Sref\,=\,\langle (A^*)^2(\funcun);\href \rangle_v
$$
as follows from $v=A+A^*$, $A(\funcun)=0$ and commutation properties of $A$ and $A^*$. Now the key computations leading to the above are
$$
\begin{array}{rcl}\ds
\langle A^*(\funcun); \Eref\cdot A^*(\funcun+\href)\rangle_v
&=&\ds
(\rhoinf+\nref)\,\Eref\,,\\[0.5em]\ds
\langle A^*(\funcun); \Einf\cdot A\href\rangle_v
&=&\ds
\Sref\Einf\,,\\[0.5em]\ds
\langle A^*(\funcun); A^*\cdot A\href\rangle_v
&=&\ds
\jref\,,\\[0.5em]\ds
\langle A^*(\funcun); \textrm{div}_x ((A+A^*)\href)\rangle_v
&=&\ds
\textrm{div}_x (\langle A^*(\funcun); (A+A^*)\href\rangle_v)
\,-\,\langle A^*(\funcun);(A+A^*)\href\rangle_v\,\Einf\\[0.5em]\ds
&=&\ds
\textrm{div}_x(\Sref\,+\,\nref\,\I)-\,(\Sref\,+\,\nref\,\I)\,\Einf\,.
\end{array}
$$

To proceed, with this in hands, the continuity equation may be turned into
$$
\d_t\nref\,+\,\textrm{div}_x(\nref\,\Einf\,+\,(\rhoinf+\nref)\,\Eref-\nabla_x\nref)
\,=\,\textrm{div}_x(\textrm{div}_x(\Sref))\,+\,\tau\textrm{div}_x(\d_t\jref)\,.
$$
Therefore we set
$$
\tnref\,=\,\nref-\tau\textrm{div}_x(\jref)
\qquad\textrm{and}\qquad
\tEref\,=\,\frac{1}{\delta^2}\nabla_x\Delta_x^{-1}\tnref
$$
and introduce $\nas$ the solution to 
$$
\d_t\nas\,+\,\textrm{div}_x(\nas\,\Einf\,+\,(\rhoinf+\nas)\,\Eas-\nabla_x\nas)
\,=\,0
$$
starting from $n_0$, where $\Eas=\delta^{-2}\nabla_x\Delta_x^{-1}\nas$. Note that the well-posedness of the equation for $\nas$ may be obtained by a simpler version of the argument proving Theorem~\ref{t:maindiff}. Moreover we may ensure that $\|\tn_0\|\leq 2\,R_0$ by requiring $\tau_0\,\|\textrm{div}_x(j_0)\|\leq R_0$ and thus restrict $\delta_0$ in a way that depends only on $R_0$ in order to deduce for any $t\geq0$
$$
\begin{array}{rcl}\ds
\|(\tnref-\nas)(t)\|^2&+&\ds
\int_0^t\|\nabla_x(\tnref-\nas)(s)\|^2\dD s\\[0.75em]\ds
&\leq&\ds
\tau^2\,\|\textrm{div}_x(j_0)\|^2\\[0.5em]
&+&\ds
K\int_0^t\Big[\,\tau^2\|\textrm{div}_x(\jref)(s)\|_{L^4(\T^2)}^2\,\|(\Einf+\tEref(s))\|_{L^4(\T^2)}^2\\[0.5em]
&&\ds
\quad\quad\,+\,\|\nref(s)\|^2\,\|(\tEref-\Eref)(s)\|_{L^\infty(\T^2)}^2\\[0.5em]
&&\ds
\quad\quad
\,+\,\tau^2\|\nabla^2(\jref)(s)\|^2\,+\,\|\textrm{div}_x(\Sref)(s)\|^2\,\Big]\,\dD s\\[0.75em]
&\leq&\ds
K'\,\tau^2\,\left[\|h_0\|^2\,+\,\|Ah_0\|^2\,+\,\|Ch_0\|^2\,+\,\|ACh_0\|^2\,+\,\tau^2\,\|C^2h_0\|^2\right]
\end{array}
$$
since $\|\textrm{div}_x(\Sref)\|\leq\,K\,\left(\|A^2C\href\|+\|A^2\href\|\right)$. This proves part $(ii)$ of Theorem~\ref{t:asymptotics}.

\bibliographystyle{abbrv}
\bibliography{bibli} 
\end{document}